\newtheorem{theorem}{Theorem}[section]
\newtheorem{lemma}[theorem]{Lemma}
\newtheorem{corollary}[theorem]{Corollary}
\newtheorem{prop}[theorem]{Proposition}
\newtheorem{defn}[theorem]{Definition}
\theoremstyle{definition}
\newtheorem{remark}[theorem]{Remark}
\newcommand{\N}{\mathbb{N}}
\newcommand{\Z}{\mathbb{Z}}
\newcommand{\R}{\mathbb{R}}
\newcommand{\C}{\mathbb{C}}
\newcommand{\e}{\epsilon}
\newcommand{\lesim}{\lesssim}
\newcommand{\gesim}{\gtrsim}
\newcommand{\supp}{\mathrm{supp}}
\renewcommand{\Re}{\mathrm{Re}}
\title{A note on maximal operators for the Schr\"{o}dinger equation on $\mathbb{T}^1.$}
\author{Yuqiu Fu}
\address{Department of Mathematics, MIT,
Cambridge, MA 02139}
\email{yuqiufu@mit.edu}
\author{Kevin Ren}
\address{Department of Mathematics, Princeton University,
Princeton, NJ 08544}
\email{kevinren@princeton.edu}
\author{Haoyu Wang}
\address{Department of Mathematics, Yale University, New Haven, CT 06520}
\email{haoyu.wang@yale.edu}
\date{\today}
\begin{document}
\maketitle

\begin{abstract}
Motivated by the study of the maximal operator for the Schr\"{o}dinger equation on the one-dimensional torus $ \mathbb{T}^1 $, it is conjectured that for any complex sequence $ \{b_n\}_{n=1}^N $,
$$ \left\| \sup_{t\in [0,N^2]} \left|\sum_{n=1}^N b_n e \left(x\frac{n}{N} + t\frac{n^2}{N^2} \right) \right| \right\|_{L^4([0,N])} \leq C_\e N^{\e} N^{\frac{1}{2}} \|b_n\|_{\ell^2} $$
In this note, we show that if we replace the sequence $ \{\frac{n^2}{N^2}\}_{n=1}^N $ by an arbitrary sequence $ \{a_n\}_{n=1}^N $ with only some convex properties, then
$$ \left\| \sup_{t\in [0,N^2]} \left|\sum_{n=1}^N b_n e \left(x\frac{n}{N} + ta_n \right) \right| \right\|_{L^4([0,N])} \leq C_\e N^\e N^{\frac{7}{12}} \|b_n\|_{\ell^2}. $$
We further show that this bound is sharp up to a $C_\e N^\e$ factor.
\end{abstract}

\section{Introduction}

The maximal operator for the Schr\"{o}dinger equation on $\R^n$ has been studied extensively. In particular, the pointwise convergence of solutions to the linear Schr\"{o}dinger equation on $\R^n$ has been established for initial data in Sobolev spaces with sharp range of exponents \cite{du2019sharp}. 

For the Schr\"{o}dinger equation on the one-dimensional torus, some partial results on the $L^p$ estimates of the maximal operators were obtained. \cite{moyua2008bounds} proved that
\[ \| \sup_{0<t<1} |\sum_{n=1}^N a_n e(2\pi (nx + n^2t)) | \|_{L^6([0,1])} \leq C_\e N^{1/3 + \e} (\sum_{n=1}^N |a_n|^2)^{1/2},\]
which is sharp up to a $C_\e N^\e$ factor. Some estimates for higher dimensional tori were obtained in \cite{wang2019pointwise,compaan2021pointwise}.

It is conjectured that for every $\e>0,$ there exists a constant $C_\e>0$ such that
\begin{equation}\label{1}
    \left\| \sup_{t\in [0,N^2]} \left|\sum_{n=1}^N b_n e \left(x\frac{n}{N} + t\frac{n^2}{N^2} \right) \right| \right\|_{L^4([0,N])} \leq C_\e N^{\e} N^{\frac{1}{2}} \|b_n\|_{\ell^2},
\end{equation}
where $b_n$ are complex numbers. To recall,  $ e(x):=e^{2\pi i x}.$ 
It is shown in \cite{barron20204} that \eqref{1} holds when $b_n = 1$ for every $n.$ 


Similarly we could consider the maximal operator in the $x$ direction, and it seems plausible to conjecture that for every $\e>0,$ there exists a constant $C_\e$ such that 
\begin{equation}\label{2}
    \left\| \sup_{x\in [0,N]} \left|\sum_{n=1}^N b_n e \left(x\frac{n}{N} + t\frac{n^2}{N^2} \right) \right| \right\|_{L^4([0,N^2])} \leq C_\e N^{\e} N^{\frac{1}{2}} \|b_n\|_{\ell^2},
\end{equation}
where $b_n$ are complex numbers.

In this paper we describe some difficulties of this problem by showing both \eqref{1} and \eqref{2} cannot hold when we replace $\{\frac{n^2}{N^2}\}_{n=1}^N$ by some uniformly convex sequences, whose definition we now give.

\begin{defn}
We say a sequence $\{a_n\}_{n=1}^N$ (with parameter $N$) is uniformly convex if it satisfies
\begin{equation}\label{3}
    a_{n+1}-a_n \in \left[\frac{1}{4N}, \frac{4}{N}\right]
\end{equation}
\begin{equation}\label{4}
    (a_{n+2}-a_{n+1})-(a_{n+1} - a_n) \in \left[\frac{1}{4N^2}, \frac{4}{N^2}  \right].
\end{equation}
\end{defn}
This definition coincides with the definition of generalized Dirichlet sequences in \cite{fu2021decoupling}. We use the name of uniformly convex sequence in this paper in hope of better clarity in this context. 

We will construct examples of uniformly convex sequences to show the following two theorems.

\begin{theorem}\label{L4xLinftThm}
There exist uniformly convex sequences (for every parameter $N\geq 10$) $\{a_n\}_{n=1}^N,$ and complex sequences $\{b_n\}_{n=1}^{N}$ such that 
\begin{equation}\label{5}
    \left\| \sup_{t\in [0,N^2]} \left|\sum_{n=1}^N b_n e \left(x\frac{n}{N} + ta_n \right) \right| \right\|_{L^4([0,N])} \geq C N^{\frac{7}{12}} \|b_n\|_{\ell^2}
\end{equation}
for every $N\geq 10.$

Moreover, for every $\e>0$ there exists $C_\e>0$ such that for any uniformly convex sequence $\{a_n\}_{n=1}^N,$ and any complex sequence $\{b_n\}_{n=1}^N$ we have 
\begin{equation}\label{12}
    \left\| \sup_{t\in [0,N^2]} \left|\sum_{n=1}^N b_n e \left(x\frac{n}{N} + ta_n \right) \right| \right\|_{L^4([0,N])} \leq C_\e N^\e N^{\frac{7}{12}} \|b_n\|_{\ell^2}.
\end{equation}
\end{theorem}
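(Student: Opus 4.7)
My plan is to treat the lower and upper bounds in \eqref{5} and \eqref{12} separately.

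For the lower bound \eqref{5}, I would construct an explicit uniformly convex sequence $\{a_n\}$ together with a choice of $\{b_n\}$ producing the $N^{7/12}$ growth. The construction should partition $\{1,\ldots,N\}$ into contiguous blocks of length $M$, and design $a_n$ to be approximately piecewise quadratic in a manner compatible with \eqref{3}--\eqref{4}, so that the sum $\sum b_n e(xn/N + t a_n)$, for each $t$ in a carefully chosen collection, concentrates on a tube in $x$ centered at a distinct focal point. Taking $b_n$ to be the indicator of a judiciously chosen union of blocks and balancing $M$, the number of active blocks, and the tube geometry against $\|b\|_{\ell^2}$ should produce $N^{7/12}$. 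The shape $7/12 = 1/2 + 1/12$ suggests that the right choice will be $M \approx N^{2/3}$ with approximately $N^{1/3}$ active blocks, so that the focal tubes of length $\sim N^{1/3}$ cover $[0,N]$ essentially once and each contributes $M = N^{2/3}$ to $\sup_t|f|$.

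For the upper bound \eqref{12}, my plan is to combine three ingredients. First, discretize $t$ to a grid $\mathcal{T} \subset [0, N^2]$ of size $O(N^2)$; this is essentially lossless because $f(x,t)$ has time-Fourier support contained in the finite set $\{a_n\}_{n=1}^N \subset [0,4]$. Second, for each fixed $t$, expand $\|f(\cdot, t)\|_{L^4([0,N])}^4$ using Plancherel in $x$ to get $N$ times the $t$-twisted additive energy
$$E_t(b) = \sum_{n_1 + n_3 = n_2 + n_4} b_{n_1} \overline{b_{n_2}} b_{n_3} \overline{b_{n_4}} \, e\bigl(t\beta\bigr), \qquad \beta = a_{n_1} + a_{n_3} - a_{n_2} - a_{n_4}.$$
Third, use the uniform convexity \eqref{3}--\eqref{4} to bound $|\beta|$ from below by a quantity comparable to $|(n_1 - n_2)(n_1 - n_4)|/N^2$ for non-degenerate quadruples, and then use the resulting oscillation in $t$ over $\mathcal{T}$ to damp the contribution of those quadruples.

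The main difficulty is translating these fixed-$t$ ingredients into a bound on the $L^4$ norm of $\sup_t |f|$. A naive estimate $\|\sup_t |f|\|_{L^4}^4 \leq \sum_{t \in \mathcal T} \|f(\cdot, t)\|_{L^4}^4$ loses the full $|\mathcal{T}| \sim N^2$, yielding only $N^{3/4}$. To save $N^{1/6}$ and reach $N^{7/12}$, I would instead use a Sobolev-type splitting such as $\sup_t |f|^2 \leq |f(x,0)|^2 + 2 \int |f| |\partial_t f|\,dt$ and then control the resulting bilinear integral by a multilinear estimate that exploits the convexity-induced cancellation in $\beta$ above. I expect this multilinear step, which can be viewed as a Kakeya-type tube-incidence statement for the convex curve $(n/N, a_n)$, to be the technical heart of the proof, and the place where uniform convexity alone (rather than the rigid parabolic structure of $n^2/N^2$) forces the $N^{1/12}$ loss compared with the conjectured $N^{1/2}$.
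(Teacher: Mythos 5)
Both halves of your outline have genuine gaps, and in both cases the missing piece is the actual mechanism the paper uses.

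For the lower bound \eqref{5}, a block/focusing construction of the kind you describe is the standard sharp example for the true parabola and cannot produce an exponent beyond $1/2$: with complex $b_n$ you can align the phases of $K$ active terms at essentially one point (and its near-translates under the dual-box geometry), but to have $\sup_t|f(x,t)|\gtrsim K$ on a set of $x$ of measure $\sim N$ with only $K$ active terms you need arithmetic, not stationary-phase, alignment. Indeed your own numerology does not close: $N^{1/3}$ blocks of length $N^{2/3}$ gives $\|b_n\|_{\ell^2}=N^{1/2}$ and height $N^{2/3}$ on $[0,N]$, i.e. only $N^{11/12}=N^{5/12}\|b_n\|_{\ell^2}$. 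The source of $7/12$ in the paper is Theorem \ref{SeqConstrThm}: a uniformly convex sequence $\{c_n\}$ (built from Farey fractions) with $|\{c_n\}\cap N^{-1}\Z|\gtrsim N^{2/3}$. One then sets $a_n=c_n-\frac{n}{N^2}$ (still uniformly convex) and $b_n=\mathbf{1}_{Nc_n\in\Z}$, so that all phases $\frac{n}{N}j+a_n\,jN=jNc_n$ are \emph{integers} at every point $(j,jN)$, $1\le j\le N$; hence $\sup_t|f|\gtrsim N^{2/3}$ on $\sim N$ unit-separated intervals, giving $N^{2/3}\cdot N^{1/4}=N^{7/12}\|b_n\|_{\ell^2}$. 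Without this number-theoretic input (which is exactly what distinguishes general uniformly convex sequences from $n^2/N^2$), no choice of blocks will reach $7/12$.

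For the upper bound \eqref{12}, your plan stops precisely where the proof has to start: the ``multilinear step'' you defer is the entire content. The Sobolev splitting $\sup_t|f|^2\le|f(x,0)|^2+2\int|f||\partial_t f|\,dt$ is far too lossy here, since $\partial_t f$ has the same size as $f$ (the $a_n$ are $O(1)$) and the $t$-integral runs over a length-$N^2$ interval; and fixed-$t$ Plancherel in $x$ cannot be combined with the supremum without a linearization. The paper instead linearizes the supremum ($t=t(x)$ on the level set $U_\alpha=\{|f|\sim\alpha\}$), uses the locally constant property to pass to a weighted $L^6$ norm over $[0,N]\times[0,N^2]$, upgrades to $[0,N^2]^2$ by $N$-periodicity in $x$, and then applies Bourgain--Demeter $\ell^2L^6$ decoupling to the curve through $(\frac{n}{N},a_n)$ --- this is where uniform convexity enters, via the $C^2$ interpolation result (Corollary \ref{cor:c2-interpolate}) guaranteeing curvature $\sim 1$. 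This yields $\alpha^6|\pi_tU_\alpha|\lesssim_\e N^{3+\e}\|b_n\|_{\ell^2}^6$, which interpolated against the trivial bound $|\pi_tU_\alpha|\le N$ gives $\alpha^4|\pi_tU_\alpha|\lesssim_\e N^{7/3+\e}\|b_n\|_{\ell^2}^4$, and dyadic pigeonholing in $\alpha$ finishes. None of these ingredients (level sets, locally constant property, decoupling, the $L^6$-to-$L^4$ interpolation that produces $7/3=4\cdot\frac{7}{12}$) appears in your outline, so as written the upper bound is not established.
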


\begin{theorem}\label{L4tLinfxThm}
There exist uniformly convex sequences (for every parameter $N\geq 10$) $\{a_n\}_{n=1}^N,$ and complex sequences $\{b_n\}_{n=1}^{N}$ such that 
\begin{equation}\label{11}
    \left\| \sup_{x\in [0,N]} \left|\sum_{n=1}^N b_n e \left(x\frac{n}{N} + ta_n \right) \right| \right\|_{L^4([0,N^2])} \geq C N^{\frac{5}{8}} \|b_n\|_{\ell^2}
\end{equation}
for every $N\geq 10.$

Moreover, for every $\e>0$ there exists $C_\e>0$ such that for any uniformly convex sequence $\{a_n\}_{n=1}^N,$ and any complex sequence $\{b_n\}_{n=1}^N$ we have 
\begin{equation}\label{n130}
    \left\| \sup_{x\in [0,N]} \left|\sum_{n=1}^N b_n e \left(x\frac{n}{N} + ta_n \right) \right| \right\|_{L^4([0,N^2])} \leq C_\e N^\e N^{\frac{2}{3}} \|b_n\|_{\ell^2}.
\end{equation}

\end{theorem}

The proof of the first parts of the theorems above relies on a family of examples of uniformly convex sequences having large intersections with an arithmetic progression. 

\begin{theorem}\label{SeqConstrThm}
   For every $\alpha \in [1/2,2],$ there exists a uniformly convex sequence $\{a_n\}_{n=1}^N$ for every $N\geq 10$ such that $|\{a_n\} \cap N^{-\alpha}\Z| \gtrsim N^{\frac{\alpha+1}{3}}.$ 
   
   For every $\alpha \in [0,1/2],$ there exists a uniformly convex sequence $\{a_n\}_{n=1}^N$ for every $N\geq 10$ such that $|\{a_n\} \cap N^{-\alpha} \Z| \gtrsim N^{\alpha}.$
   
   Conversely, for every $\e>0$ there exists $C_\e>0$ such that for every uniformly convex sequence $\{a_n\}_{n=1}^N$  and every $\alpha \in [0,2],$ we have
   \begin{equation}\label{201}
       |\{a_n\}_{n=1}^N \cap N^{-\alpha}\Z | \leq 
   \begin{cases}
   C_\e N^\e N^{\frac{\alpha + 1}{3}} & \text{ if } \alpha \in [\frac{1}{2}, 2], \\
   100N^{\alpha} & \text{ if } \alpha \in [0,\frac{1}{2}]
   \end{cases}.
   \end{equation}
\end{theorem}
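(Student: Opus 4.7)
The plan is to prove the upper and lower bounds by a single primitive-vector-counting argument, run in opposite directions.

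For the upper bound in $\alpha\in[0,1/2]$, the trivial range estimate from \eqref{3} suffices: $a_N-a_1\le (N-1)\cdot 4/N\le 4$, so $\{a_n\}$ lies in an interval of length at most $4$, which contains at most $4N^\alpha+1\le 100N^\alpha$ points of $N^{-\alpha}\Z$ for $N\ge 10$. For $\alpha\in[1/2,2]$, I let $S=\{n_1<\dots<n_m\}$ be the hitting set and set $k_i:=N^\alpha a_{n_i}\in\Z$; then write the integer displacement $(n_{i+1}-n_i,k_{i+1}-k_i)=g_i(u_i,v_i)$ with $(u_i,v_i)$ primitive and $g_i\ge 1$. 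The strict convexity \eqref{4} makes the slopes $v_i/u_i$ pairwise distinct, and \eqref{3} places them in $[\tfrac{1}{4}N^{\alpha-1},4N^{\alpha-1}]$. A standard area count shows that the number of primitive vectors $(u,v)$ with $u\le U$ and slope in this range is $O(U^2 N^{\alpha-1})$, so the $i$-th smallest such $u$-coordinate is $\gtrsim \sqrt{i}\,N^{(1-\alpha)/2}$. Summing gives $\sum u_i\gtrsim m^{3/2}N^{(1-\alpha)/2}$, and combined with $\sum u_i\le\sum g_iu_i=n_m-n_1\le N$ this forces $m\lesssim N^{(\alpha+1)/3}$ (the $N^\e$ factor in the statement is a safety margin, since the argument in fact yields a clean $m\le CN^{(\alpha+1)/3}$).

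For the matching constructions I would reverse the argument. In the regime $\alpha\in[1/2,2]$, set $Q:=\lfloor c_0 N^{(2-\alpha)/3}\rfloor$ with a small absolute constant $c_0$, and enumerate all primitive vectors $(u,v)$ with $u$ in the dyadic window $[Q/2,Q]$ and slope in a suitable length-$\Theta(N^{\alpha-1})$ subinterval of $[\tfrac{1}{4}N^{\alpha-1},4N^{\alpha-1}]$; sort them by slope into $(u_1,v_1),\dots,(u_M,v_M)$ with $M\asymp N^{(\alpha+1)/3}$; and assemble the convex lattice polygon $P_0=(0,0),\ P_i=P_{i-1}+(u_i,v_i)$, which has $\sum u_i=O(N)$. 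Declaring $a_{X_i}:=Y_i/N^\alpha$ at each vertex $(X_i,Y_i)=P_i$ furnishes $M+1$ lattice hits, and I would extend to $n\in\{1,\dots,N\}$ by a $C^1$ piecewise-quadratic interpolation whose segment-wise second differences $c_i$ satisfy the matching relation $c_iu_i+c_{i-1}u_{i-1}=2(s_i-s_{i-1})/N^\alpha$ with $s_i=v_i/u_i$. For $\alpha\in[0,1/2]$ the construction collapses to the single-column $v=1$ case: take $M\asymp N^\alpha$ denominators $u_i\in[N^{1-\alpha}/4,4N^{1-\alpha}]$ decreasing arithmetically with common step $\delta\asymp N^{1-2\alpha}$, and apply the same $C^1$ smoothing.

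The main obstacle will be the smoothing step. A pure piecewise-linear interpolation concentrates all second differences at the vertices as jumps of size $(s_i-s_{i-1})/N^\alpha\sim 1/(Q^2 N^\alpha)=N^{-(\alpha+4)/3}$, which exceeds $4/N^2$ whenever $\alpha<2$; spreading each jump over its adjacent segment of length $\sim Q$ brings the effective second difference down to $\sim 1/(Q^3 N^\alpha)=1/N^2$, landing precisely inside $[1/(4N^2),4/N^2]$. The dyadic restriction $u\in[Q/2,Q]$ is essential for this: without it the consecutive slope gaps would vary by several orders of magnitude, and no piecewise-quadratic interpolation with all $c_i\in[1/(4N^2),4/N^2]$ could accommodate them. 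With the dyadic restriction, the recurrence for the $c_i$'s has bounded positive coefficients and admits a solution in the required window once an appropriate initial $c_1$ is fixed, completing the construction.
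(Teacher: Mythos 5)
Your two upper-bound arguments are correct, and they are a genuine addition relative to the paper, which simply cites Corollary~4.7 of \cite{fu2021decoupling} for \eqref{201}: the interval count disposes of $\alpha\in[0,1/2]$, and for $\alpha\in[1/2,2]$ the primitive-vector argument is sound --- strict convexity from \eqref{4} makes the secant slopes, hence the primitive directions, pairwise distinct; \eqref{3} confines the slopes to $[\tfrac14 N^{\alpha-1},4N^{\alpha-1}]$; the count $O(U^2N^{\alpha-1})$ of admissible primitive vectors of height $\le U$ gives $\sum u_i\gtrsim m^{3/2}N^{(1-\alpha)/2}$, which against the budget $\sum g_iu_i\le N$ yields $m\lesssim N^{(\alpha+1)/3}$ with no $N^\e$ loss. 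The $\alpha\in[0,1/2]$ construction (all $v_i=1$, denominators in arithmetic progression) also looks fine, because there the consecutive slope gaps really are uniform by design. Your overall architecture for the lower bound (convex lattice polygon from sorted primitive vectors, then interpolate) is the same as the paper's Theorem~\ref{SeqConstrThm2} combined with Theorem~\ref{thm:c2-interpolate}.

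The gap is in the construction for $\alpha\in[1/2,2]$, precisely at the smoothing step you flagged: the dyadic restriction $u\in[Q/2,Q]$ does \emph{not} make the consecutive slope gaps comparable to $1/Q^2$. If $p/q$ lies in your slope window with $q\ll Q$ (for $\alpha=1$ the window has length $\sim 1$ and contains, say, $3/2$), then every primitive $(u,v)$ with $u\in[Q/2,Q]$ and $v/u\ne p/q$ satisfies $|v/u-p/q|\ge 1/(qu)\ge 1/(qQ)$, and $p/q$ itself is absent from your list since its primitive representative has first coordinate $q\notin[Q/2,Q]$. Hence two consecutive slopes straddling $p/q$ have gap $\gtrsim 1/(qQ)\gg 1/Q^2$, and the $C^1$ matching relation forces $c_iu_i+c_{i-1}u_{i-1}=2(s_i-s_{i-1})/N^\alpha\gtrsim 1/(qQN^\alpha)$, whereas the admissible maximum is $8Q/N^2\sim 1/(Q^2N^\alpha)$ (using $Q^3=N^{2-\alpha}$); this overshoots by the factor $Q/q\gg 1$, so some $c_i$ must leave $[\tfrac1{4N^2},\tfrac4{N^2}]$. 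Such fractions are unavoidable: a window of length $\Theta(N^{\alpha-1})$ contains fractions of every denominator $q\gtrsim N^{1-\alpha}$, and $N^{1-\alpha}\ll Q$ exactly when $\alpha>1/2$, so no choice of subinterval saves the argument. The paper circumvents this by making the horizontal step \emph{adaptive} rather than dyadically fixed: for consecutive Farey slopes $r_i=a/b$, $r_{i+1}=c/d$ it chooses representations with denominators $b,d\sim\Delta_i:=N^{2-\alpha}(r_{i+1}-r_i)$, proportional to the local slope gap, and uses the mediant-type edge $(b+d,a+c)$; then $\Delta p_i/\Delta x_i\sim 1$ uniformly however irregular the gaps are, while the total horizontal budget $\sum\Delta_i=N^{2-\alpha}\sum(r_{i+1}-r_i)=O(N)$ telescopes. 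To repair your construction you would need to let $u_i$ scale with $s_{i+1}-s_i$ in the same way, rather than confining all $u_i$ to one dyadic block.
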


The inequality \eqref{201} is Corollary 4.7 of \cite{fu2021decoupling} (when $\theta = 1$). The construction of example part of Theorem \ref{SeqConstrThm} will be proved by Theorem \ref{SeqConstrThm2} below. So the examples that we construct show that \eqref{201} is sharp up to a $C_\e N^\e$ factor. 

\vspace{.1in}
\noindent
{\bf Notation.} For two quantities $A,B,$ we use $A\lesssim B$ to denote the statement $A\leq CB$ for some constant $C.$ $A\lesssim_q B$ will denote the statement that $A\leq C_q B$ for some constant $C_q$ depending on $q.$ $A\sim B$ means $A\lesssim B$ and $B\lesssim A,$ and similarly $A\sim_q B$ means $A\lesssim_q B$ and $B\lesssim_q A.$ For $ x \in \R $ and $ A \subset \R $, let $ d(x,A) $ to denote the distance of the point $ x $ to the set $ A $.

\vspace{.1in}
\noindent
{\bf Acknowledgements.} The authors would like to thank Larry Guth for helpful discussions.

\section{The $L^4_x L^\infty_t$ estimate}
In this section, we prove Theorem \ref{L4xLinftThm}.

\subsection{Proof of \eqref{5}}
Let $\{c_n\}_{n=1}^N$ be a uniformly convex sequence from Theorem \ref{SeqConstrThm} with the property that $|\{c_n\}_{n=1}^N \cap N^{-1}\Z| \gtrsim N^{2/3}.$ 
Let $\{a_n\}_{n=1}^N$ be the sequence with $a_n = c_n - \frac{n}{N^2}.$
Then $\{a_n\}_{n=1}^N$ is a uniformly convex sequence because the sequence $\{ -\frac{n}{N^2}\}_{n=1}^N$ is an arithmetic progression with common difference $-\frac{1}{N^2}$.

Let 
\[f(x,t) = \sum_{1\leq n\leq N: c_n \in N^{-1}\Z} e(\frac{n}{N} x + a_n t),\]
that is, in \eqref{5} we are taking $b_n = 1$ if $Nc_n \in \Z$ and $b_n = 0$ otherwise.
We check that when $c_n \in N^{-1}\Z,$ we have
\[ (\frac{n}{N}, a_n) \cdot (j, j N) \in \Z \]
for every $1\leq j \leq N$ with $j\in \Z.$ Indeed when $c_n \in N^{-1}\Z,$
\begin{align} \label{n112}
(\frac{n}{N}, a_n) \cdot (j, jN) & = \frac{jn}{N} + jN a_n \\ \nonumber
& = \frac{jn}{N} + jN (c_n - \frac{n}{N^2})  \\  \nonumber
& = \frac{jn}{N} + jN c_n - \frac{jn}{N} \\ \nonumber
& = jN c_n \\ \nonumber
&\in \Z.
\end{align}
Therefore we have shown that 
\begin{equation}\label{n111}
    f(j, jN) = \#\{ n\in \N: 1\leq n \leq N, c_n \in N^{-1}\Z \} \gtrsim N^{2/3}.
\end{equation}
The calculation in \eqref{n112} in fact shows that for every $ 1 \leq j \leq N $ with $ j \in \Z $, if $|(x,t) - (j,jN) | \leq \frac{1}{100}$ we have
\begin{equation}\label{n113}
    |f((x,t))| \gtrsim N^{2/3}.
\end{equation}
Indeed, for every $1\leq n\leq N$ with $c_n \in N^{-1}\Z,$ if $|(j,jN)-(x,t)| \leq \frac{1}{100}$,
\begin{align*}
    d \left((\frac{n}{N}, a_n) \cdot (x, t), \Z \right) &\leq  d \left((\frac{n}{N}, a_n) \cdot (j, jN), \Z \right) + \left|(\frac{n}{N}, a_n)\right| |(j,jN)-(x,t)| \\
    &\leq 0 + 2 |(j,jN)-(x,t)|\\ 
    &\leq \frac{1}{50}.
\end{align*}
Therefore we have when $|(j,jN)-(x,t)| \leq \frac{1}{100}$,
\[ \Re(f(x,t)) \gtrsim \frac{1}{100} \#\{ n\in \N: 1\leq n \leq N, c_n \in N^{-1}\Z \} \gtrsim N^{2/3}, \]
which implies 
\begin{equation}\label{n115}
    |f(x,t)| \gtrsim N^{2/3}
\end{equation}
when $|(j,jN)-(x,t)| \leq \frac{1}{100}$ for $1\leq j \leq N.$

Because of \eqref{n115}, we have shown
\[ \left\{ (x,t): |(j,jN)-(x,t)| \leq \frac{1}{100}, 1\leq j \leq N  \right\} \subset \{(x,t) : |f(x,t)| \gtrsim N^{2/3} \},\]
which implies for every $1\leq j \leq N$, if $x\in [j-\frac{1}{100}, j+ \frac{1}{100}]$ then
\[ \sup_{t\in [0,N^2]} |f(x,t)| \gtrsim N^{2/3}. \]

Therefore,
\[ \| \sup_{t\in [0,N^2]} | f(x,t) | \|_{L^4([0,N])} \gtrsim N^{2/3} N^{1/4} = N^{7/12} \|b_n\|_{\ell^2}.  \]
This concludes the proof of \eqref{5}. 

\subsection{Proof of \eqref{12}}

Throughout this subsection, let $\{a_n\}_{n=1}^{N}$ be a uniformly convex sequence, and let $\{b_n\}_{n=1}^N$ be a sequence of complex numbers. 
The proof will utilize the $\ell^2L^6$ decoupling inequality of Bourgain and Demeter \cite{bourgain2015proof}.

\begin{theorem}[\cite{bourgain2015proof}]\label{BDdecoupling}
Suppose $f:[0,1] \rightarrow \R$ is $C^2$ and satisfies $|f'(x)| \leq 4$, $f''(x) \in [\frac{1}{4},4]$ for $x \in [0,1].$ Let $\Theta$ be a collection of disjoint subsets of $\R^2$ defined by $\{\theta: \theta = \{ (x, y)\in \R^2: x \in [\frac{j}{N}, \frac{j+1}{N}), |y - f(x)| \leq \frac{1}{N^2} \} \}.$ Suppose $f$ has Fourier transform supported on $\bigcup_{\theta \in \Theta} \theta,$ and let $f_\theta$ denote the Fourier projection $\int_\theta \hat{f}(\xi) e(x \cdot \xi) d\xi.$ For every $\e>0,$ there exists $C_\e$ such that 
\[ \| f \|_{L^6(\R^2)} \leq C_\e N^\e (\sum_{\theta \in \Theta} \|f_\theta\|_{L^6(\R^2)}^2 )^{1/2} \]
for every $f$ with $\supp \hat{f} \subset \bigcup_{\theta \in \Theta} \theta.$ 
\end{theorem}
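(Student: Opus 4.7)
The plan is to follow the Bourgain--Demeter bilinear reduction combined with induction on scales. Since $f'' \in [1/4, 4]$ and $|f'| \leq 4$, an affine change of coordinates in $\R^2$ reduces $f$ to a small $C^2$ perturbation of the model parabola $x \mapsto x^2/2$; the caps $\theta$ become the canonical $N^{-1} \times N^{-2}$ rectangles tangent to the parabola. Constants will be uniform across the class $\{f : f'' \in [1/4,4]\}$ because the parabolic rescaling used in the induction preserves this class up to absolute constants.

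Next, apply the Bourgain--Guth broad/narrow decomposition. At each $x \in \R^2$, either a single cap dominates, $|f(x)| \lesim \max_\theta |f_\theta(x)|$ (narrow), or there exist transverse caps $\theta_1, \theta_2$ separated by distance $\gesim N^{-1/2}$ along the parabola such that $|f(x)|^2 \lesim |f_{\theta_1}(x) f_{\theta_2}(x)|$ (broad). This splits $\|f\|_{L^6}^6$ into a narrow contribution, to be absorbed by the induction, and a bilinear contribution. For the narrow piece, localizing to an arc of length $N^{-1/2}$ and applying parabolic rescaling converts the decoupling problem there into the same decoupling problem for the whole parabola at scale $N^{1/2}$, which is a strictly smaller instance.

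For the bilinear contribution, the key input is a bilinear $L^6$ inequality
\[
\bigl\| |f_{\theta_1} f_{\theta_2}|^{1/2} \bigr\|_{L^6(\R^2)} \leq C N^\e \Bigl( \sum_\theta \|f_\theta\|_{L^6}^2 \Bigr)^{1/2}
\]
for transverse $\theta_1, \theta_2$. In $\R^2$ this can be derived from the classical C\'ordoba--Fefferman $L^4$ orthogonality for transverse sectors together with a wave-packet / Cauchy--Schwarz argument, avoiding the full Bennett--Carbery--Tao multilinear Kakeya machinery needed for Bourgain--Demeter in higher dimensions. Letting $D(N)$ denote the best constant in the claimed inequality uniformly over the admissible class, the previous steps yield a recursive inequality of the schematic form $D(N) \leq C_\e N^\e + C\, D(N^{1/2})^\alpha$ for some $\alpha < 2$, which iterates in $O(\log\log N)$ rounds to $D(N) \leq C_\e N^\e$.

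The main obstacle is that $L^6$ is the critical exponent for the parabola in $\R^2$: there is no algebraic decay left over to soak up losses from any Cauchy--Schwarz or H\"older step, so the full $N^\e$ budget must be apportioned across $O(\log\log N)$ iterations, forcing the bilinear estimate and the broad/narrow split to interlock tightly. A secondary obstacle is uniformity of $C_\e$ over the whole class of $f$ with $f'' \in [1/4,4]$ rather than the model parabola alone; this is resolved by observing that parabolic rescaling preserves this class up to enlarging the range of $f''$ by a bounded factor, so a single $D(N)$ governs all admissible curves simultaneously.
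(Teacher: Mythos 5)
This theorem is not proved in the paper at all: it is quoted as a black box from Bourgain--Demeter \cite{bourgain2015proof} (and applied via its local version \eqref{n120}), so there is no internal proof to compare your sketch against. Judged on its own merits, your outline correctly reproduces the broad strokes of the known argument (reduction of a general curve with $f''\in[\frac14,4]$ to the parabola, broad/narrow decomposition, parabolic rescaling of the narrow part, induction on scales), but it has a genuine gap at the step you label ``the key input.'' The fine-scale bilinear inequality
\[
\bigl\| |f_{\theta_1} f_{\theta_2}|^{1/2} \bigr\|_{L^6(\R^2)} \lesssim_{\epsilon} N^{\epsilon} \Bigl( \sum_{\theta} \|f_\theta\|_{L^6}^2 \Bigr)^{1/2}
\]
for $1$-separated (or even $N^{-1/2}$-separated) arcs, with caps at the scale $N^{-1}$ and only an $N^{\epsilon}$ loss, is essentially equivalent to the full $\ell^2 L^6$ decoupling theorem; it does not follow from C\'ordoba--Fefferman $L^4$ biorthogonality plus Cauchy--Schwarz or wave packets. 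What transversality and $L^4$ orthogonality give directly is a bilinear estimate at exponent $4$; interpolating that with trivial $L^2$/$L^\infty$ bounds to reach $L^6$ costs a positive power of $N$, which is exactly the pre-decoupling state of the art. Even in two dimensions, where the multilinear Kakeya input degenerates to C\'ordoba's elementary rectangle argument, the passage from $L^4$ information to an $\epsilon$-loss at the critical exponent $6$ is what requires the full Bourgain--Demeter multiscale machinery (ball inflation, $L^2$ orthogonality at intermediate scales, and an iteration proving the bilinear and linear constants are comparable), or an equivalent scheme such as efficient congruencing. So your recursion $D(N) \leq C_\epsilon N^\epsilon + C\,D(N^{1/2})^{\alpha}$ is not justified as written, because the ``$C_\epsilon N^\epsilon$'' term hides the theorem itself.

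A smaller point: your uniformity argument should be stated more carefully. If parabolic rescaling only preserved the class $f''\in[\frac14,4]$ ``up to enlarging the range by a bounded factor,'' the constants would degrade over the $\log\log N$ iterations; fortunately the rescaled curve $g(u) = N\bigl(f(a+N^{-1/2}u) - f(a) - f'(a)N^{-1/2}u\bigr)$ satisfies $g''(u) = f''(a+N^{-1/2}u)$, so the curvature range is preserved exactly and a single decoupling constant for the class does close the induction. That fix is easy; the bilinear step is the real missing ingredient.
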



Another ingredient we need is the locally constant property. First we set up some notations. The functions we define below are in $1$ or $2$ dimensions, that is, $n=1,2.$
Let $w(y)$ denote the function
\[ w(y) = \frac{1}{(1 + d(y,[-\frac{1}{2},\frac{1}{2}]^n))^{100}}. \]
So $w(y)$ is a smooth positive function which decays outside the $n$-dimensional cube $[-\frac{1}{2},\frac{1}{2}]^n$. For a unit cube $Y$ centered at $z\in \R^n,$ we write $w_{Y}(y)$ to denote the function $w(y-z),$ which decays outside the cube $Y.$ For $X,$ a subset of $\R^n,$ we let $\bigsqcup Y$ be the minimal covering of $X$ by lattice unit cubes, and write $w_{X} (y)$ to denote the function $\sum_{Y} w_Y(y).$ For a single point $x\in \R^n,$ we let $w_x(y)$ denote the function $w_{\{x\}} (y).$ We write $\| f \|_{L^p(w_X)}$ to denote the integral $(\int_{\R^n} |f(y)|^p w_{X}(y) dy)^{1/p}.$

The following locally constant property is well-known (see e.g. \cite{demeter2020fourier}).
\begin{prop}\label{locconstprop}
Let $n=1,2.$ Suppose $f$ has Fourier transform supported in $B_{10}(0).$ Then for $1\leq q < p \leq \infty,$
\[ \|f\|_{L^p(Y)} \lesssim_{p,q} (\int_{\R^n} |f|^q w_{Y})^{\frac{1}{q}}. \]
\end{prop}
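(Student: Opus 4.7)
The strategy is to exploit the Fourier support condition to realize $f$ as a convolution with a rapidly decaying Schwartz function, and then transfer control of $f$ on the unit cube $Y$ to a weighted integral of $|f|^q$ against $w_Y$.

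First, I would fix a Schwartz function $\phi \in \mathcal{S}(\R^n)$ with $\widehat{\phi} \equiv 1$ on $B_{10}(0)$. Since $\supp \widehat{f} \subset B_{10}(0)$, the reproducing identity $f = f * \phi$ holds. For any large exponent $M$, Schwartz decay gives $|\phi(u)| \lesssim_M (1+|u|)^{-M}$, which yields the pointwise bound
\[ |f(y)| \leq \int_{\R^n} |f(z)|\,|\phi(y-z)|\,dz \lesssim_M \int_{\R^n} |f(z)|\,(1+|y-z|)^{-M}\,dz. \]
For $y \in Y$, the crude inequality $|y-z| \geq d(z,Y) - \operatorname{diam}(Y)$ lets one compare $(1+|y-z|)^{-M}$ with $w_Y(z)$; taking $M = 100$ matches the exponent in the definition of $w$ and produces the uniform bound $|f(y)| \lesssim \int |f(z)|\,w_Y(z)\,dz$ for all $y \in Y$.

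Next I would apply H\"older's inequality by splitting $w_Y = w_Y^{1/q} \cdot w_Y^{1/q'}$ to get
\[ \int |f|\,w_Y \leq \Bigl(\int |f|^q\,w_Y\Bigr)^{1/q} \Bigl(\int w_Y\Bigr)^{1/q'}, \]
where $q'$ is the conjugate exponent of $q$. Since $w_Y$ is an $L^1$ weight on $\R^n$, the second factor is a harmless constant. Combining the previous two displays settles the $p=\infty$ case, and the full range $q < p \leq \infty$ then follows from the trivial estimate $\|f\|_{L^p(Y)} \leq |Y|^{1/p}\,\|f\|_{L^\infty(Y)} = \|f\|_{L^\infty(Y)}$, using $|Y|=1$.

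The only step requiring care is the comparison between the polynomial weight $(1+|y-z|)^{-M}$ and the cube-centered weight $w_Y(z)$, which reduces to the fact that $\operatorname{diam}(Y) = O(1)$; one just has to check that the implicit constants absorb this boundedness. Otherwise the argument is a routine instance of the reproducing-formula / Bernstein mechanism and contains no genuine obstacle, which is why the paper defers to the standard reference \cite{demeter2020fourier}.
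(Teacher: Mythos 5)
Your proof is correct and is precisely the standard reproducing-kernel argument (write $f = f*\phi$ with $\widehat{\phi}\equiv 1$ on $B_{10}(0)$, use Schwartz decay to dominate the kernel by $w_Y$, then H\"older) that the paper is implicitly invoking by citing \cite{demeter2020fourier} without giving its own proof. The only point worth noting is that the comparison step is even simpler than you suggest: for $y\in Y$ one has $|y-z|\ge d(z,Y)$ directly, so $(1+|y-z|)^{-100}\le w_Y(z)$ with no diameter correction needed.
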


Theorem \ref{BDdecoupling} has a local version (for example can be found as Theorem 5.1 in \cite{bourgain2017study}), which states that
\begin{equation}\label{n120}
    \| f \|_{L^6(w_{B_{N^2}})} \lesssim_\e N^\e (\sum_{\theta \in \Theta} \|f_\theta\|_{L^6(w_{B_{N^2}})}^2)^{1/2}
\end{equation}
for every $f:\R^2 \rightarrow \C$ with $\supp \hat{f} \subset \bigcup_{\theta \in \Theta} \theta.$ 

Now we are ready to prove \eqref{12}.
Denote the exponential sum $\sum_{n=1}^N b_n e(\frac{n}{N} x + a_n t)$ by $f(x,t).$ For every $\alpha>0,$ 
let $U_\alpha$ denote the level set 
\begin{equation}\label{eq:U_alpha}
U_\alpha := \{ (x,t) \in [0,N] \times [0,N^2]: |f(x,t)| \in [ \alpha/2, \alpha )  \}.
\end{equation}
The following proposition, which together with a dyadic pigeonholing argument will imply \eqref{12}.  

\begin{prop}\label{levelsettprop}
For every $\alpha>0$ we have 
\[ \alpha^4 | \pi_t U_\alpha|  \lesssim_\e C_\e N^\e N^{\frac{7}{3}} \|b_n\|_{\ell^2}^4  \]
where $\pi_t : \R^2 \rightarrow \R$ is the projection along the $t$-axis.
\end{prop}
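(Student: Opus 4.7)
The plan is to apply the $\ell^2 L^6$ decoupling inequality of Bourgain--Demeter (Theorem~\ref{BDdecoupling}) to the exponential sum $f(x,t) = \sum_{n=1}^N b_n e(\tfrac{n}{N}x + a_n t)$, whose Fourier transform is supported on the $N$ frequency points $\xi_n := (n/N, a_n)$ lying on the convex curve $\xi \mapsto (\xi, a(\xi))$. Thanks to the uniform convexity of $\{a_n\}$, this curve satisfies the hypotheses of Theorem~\ref{BDdecoupling} after rescaling. The first step is a standard selection and thickening: for each $x \in X := \pi_t U_\alpha$, choose $t(x) \in [0, N^2]$ with $(x, t(x)) \in U_\alpha$, so that $|f(x, t(x))| \geq \alpha/2$. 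Since the Fourier support of $f$ in the $t$-variable has extent $O(1)$, Proposition~\ref{locconstprop} yields $|f(x,t)| \gtrsim \alpha$ for all $t$ in a small interval about $t(x)$. Hence the thickened graph $\Gamma := \{(x,t) : x \in X,\ |t - t(x)| \leq c\}$ is a $2$-dimensional set of measure $|\Gamma| \gtrsim |X|$ on which $|f| \gtrsim \alpha$.

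Next I would invoke the local decoupling inequality~\eqref{n120} at scale $R = N^2$. Each cap $\theta_n$ contains exactly one frequency, so $f_{\theta_n}(x,t) = b_n e(\xi_n \cdot (x,t))$ with $\|f_{\theta_n}\|_{L^6(w_{B_{N^2}})} \lesssim |b_n| N^{2/3}$. Summing and applying decoupling yields
\[
\|f\|_{L^6(w_{B_{N^2}})}^6 \lesssim_\e N^\e N^4 \|b\|_{\ell^2}^6,
\]
and restricting to $\Gamma$ gives the intermediate bound $\alpha^6 |X| \lesssim_\e N^\e N^4 \|b\|_{\ell^2}^6$. Combining this with the trivial estimate $|X| \leq N$ by H\"older gives $\alpha^4 |X| \leq (\alpha^6 |X|)^{2/3}|X|^{1/3} \lesssim N^{3+\e}\|b\|_{\ell^2}^4$, which is a factor of $N^{2/3}$ weaker than the target $N^{7/3+\e}$.

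Closing this $N^{2/3}$ gap is the main technical obstacle. The target $\alpha^4 |X| \lesssim N^{7/3+\e}\|b\|_{\ell^2}^4$, interpolated against $|X| \leq N$, is equivalent to the improved $L^6$ estimate $\alpha^6 |X| \lesssim N^{3+\e}\|b\|_{\ell^2}^6$. Saving the extra factor $N$ should exploit that $\Gamma$ is extremely sparse: its $2$-dimensional measure is at most $|X| \leq N$, while the natural decoupling ball $B_{N^2}$ has measure $N^4$. I expect the improvement to come either from a refined decoupling inequality that tracks the incidence structure of frequency wave packets with $\Gamma$, or from a pigeonhole decomposition of $\{t(x)\}_{x\in X}$ into bands of height $\sim N$ combined with decoupling at scale $R = N$ on each $N \times N$ sub-strip: there one verifies via Theorem~\ref{BDdecoupling} that $\|f\|_{L^6}^6 \lesssim N^{3+\e}\|b\|_{\ell^2}^6$ per strip, and the $x$-orthogonality $\int_0^N e(\tfrac{n-m}{N}x)\,dx = 0$ for integer $n \neq m$ on the period $[0,N]$ should allow an incidence-type argument that avoids the naive summation loss (which would restore $N^{4+\e}$). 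The sharpness of the exponent $\tfrac{7}{3}$ already established in the proof of~\eqref{5} both confirms that no further improvement is possible and dictates the critical configurations the refined argument must be able to handle.
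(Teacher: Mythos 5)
Your setup (choosing $t(x)$, thickening with Proposition~\ref{locconstprop}, and applying the local decoupling inequality~\eqref{n120} over $B_{N^2}$) matches the paper's argument exactly, and your bookkeeping is right: that route alone gives $\alpha^6|\pi_t U_\alpha| \lesssim_\e N^{4+\e}\|b_n\|_{\ell^2}^6$, which after interpolating with $|\pi_t U_\alpha|\le N$ falls short of the target by $N^{2/3}$. You correctly identify that the missing ingredient is a gain of one factor of $N$ in the $L^6$ estimate, but you do not supply it, and your two candidate mechanisms would not work as stated: a ``refined decoupling tracking incidences'' is not needed, and decoupling at scale $R=N$ on $N\times N$ sub-strips is not available, since the caps $\theta$ have width $N^{-1}$ and thickness $N^{-2}$, so their dual boxes have length $N^2$ and decoupling cannot be localized below that scale.

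The missing idea is elementary: $f(x,t)=\sum_n b_n e(\frac{n}{N}x+a_n t)$ is $N$-periodic in $x$, because each frequency $\frac{n}{N}$ lies in $\frac{1}{N}\Z$. Hence
\[
\int_{\R^2}|f|^6\,w_{[0,N]\times[0,N^2]}\,dx\,dt \;\lesssim\; \frac{1}{N}\int_{\R^2}|f|^6\,w_{[0,N^2]\times[0,N^2]}\,dx\,dt,
\]
using $\sum_{j=1}^{N} w_{[(j-1)N,jN]\times[0,N^2]}\lesssim w_{[0,N^2]\times[0,N^2]}$ together with the fact that each translated integral equals the original by periodicity. Applying \eqref{n120} to the right-hand side gives $N^{\e}N^4\|b_n\|_{\ell^2}^6$, so after dividing by $N$ one obtains $\alpha^6|\pi_t U_\alpha|\lesssim_\e N^{3+\e}\|b_n\|_{\ell^2}^6$, and then $\alpha^4|\pi_t U_\alpha|\le(\alpha^6|\pi_t U_\alpha|)^{2/3}|\pi_t U_\alpha|^{1/3}\lesssim_\e N^{\e}N^{2+\frac13}\|b_n\|_{\ell^2}^4=N^{\e}N^{7/3}\|b_n\|_{\ell^2}^4$ as claimed. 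Your remark about ``$x$-orthogonality on the period $[0,N]$'' is adjacent to this observation, but periodicity (not orthogonality) is what is actually used, and it must be applied \emph{before} decoupling, to pass from the thin slab $[0,N]\times[0,N^2]$ to the full square $[0,N^2]^2$ on which decoupling is legitimate.
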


\begin{proof}
For every $x\in \pi_t U_\alpha,$ let $t(x)\in [0,N^2]$ be such that 
\[ |f(x,t(x))| \geq \frac{\alpha}{2}. \]
By the definition of $\pi_t U_\alpha,$ we have
\begin{equation}\label{42601}
    \alpha^6 |\pi_t U_\alpha | \lesssim \int_{\pi_t U_\alpha}  |f(x,t(x))|^6 dx   . 
\end{equation}

Because of Proposition \ref{locconstprop}, we have 
\[ |f(x,t(x))| \lesssim (\int_{\R} |f(x,t)|^6 w_{t(x)}(t) dt)^{1/6}, \]
combining which with \eqref{42601} yields
\[ \alpha^6 |\pi_t U_\alpha | \lesssim \int_{\pi_t U_\alpha} \int_\R |f(x,t)|^6 w_{t(x)}(t) dx dt . \]
Noting the pointwise estimate
\[ 1_{\pi_t U_\alpha}(x) w_{t(x)}(t) \lesssim w_{U_\alpha}(x,t), \]
we obtain
\[ \alpha^6 |\pi_t U_\alpha | \lesssim \int_{\R^2} |f(x,t)|^6 w_{U_\alpha}(x,t) dx dt \lesssim \int_{\R^2} |f(x,t)|^6 w_{[0,N]\times [0,N^2]}(x,t) dx dt. \]
Since $\sum_{j=1}^{N} w_{[(j-1)N, jN]\times [0,N^2]}(x,t) \lesssim w_{[0,N^2]\times [0,N^2]},$ and the exponential sum $f(x,t)$ is $N$-periodic in $x$, we have
\[ \int_{\R^2} |f(x,t)|^6 w_{[0,N]\times [0,N^2]}(x,t) dx dt \lesssim \frac{1}{N} \int_{\R^2} |f(x,t)|^6 w_{[0,N^2]\times [0,N^2]}(x,t) dx dt. \]
Since $f(x,t)$ has Fourier support on the parabola, by \eqref{n120} we have
\begin{align*}
    \int_{\R^2} |f(x,t)|^6 w_{[0,N^2]\times [0,N^2]}(x,t) dx dt & \lesssim_\e N^\e (\sum_{n=1}^N \| b_n e(\frac{n}{N}x + a_nt) \|_{L^6(w_{B_{N^2}})}^2)^3 \\
    & \lesssim  N^\e N^{4} \|b_n\|_{\ell_2}^6.
\end{align*}
Hence, we have
\begin{equation}\label{n121}
     \alpha^6 |\pi_t U_\alpha | \lesssim_\e N^\e N^3 \|b_n\|_{\ell_2}^6.
\end{equation}
On the other hand we have the trivial bound
\begin{equation}\label{n122}
    |\pi_t U_\alpha | \leq N. 
\end{equation}
Combining \eqref{n121} and \eqref{n122} we obtain
\[ \alpha^4 | \pi_t U_\alpha|  \lesssim_\e C_\e N^\e N^{\frac{7}{3}} \|b_n\|_{\ell^2}^4.  \]

\end{proof}

To conclude \eqref{12}, we combine Proposition \ref{levelsettprop} with a dyadic pigeonholing argument.

\begin{proof}[Proof of \eqref{12}]
  Without loss of generality we assume $\| b_n \|_{\ell^2} = 1.$ Then by the triangle inequality, $|f(x,t)| < N^{50}.$
  So we may write 
  \begin{multline}\label{n161}
      \left\| \sup_{t\in [0,N^2]} |f(x,t) | \right\|_{L^4([0,N])}^4\\
      \leq \sum_{\alpha \in [N^{-100},N^{100}], \, \alpha \text{ dyadic}} \left\| \sup_{t\in [0,N^2]} \left|\sum_{n=1}^N b_n e \left(x\frac{n}{N} + ta_n\right) \right| \right\|_{L^4(\pi_t U_\alpha)}^4 + E
  \end{multline}
  where 
  \[ E = \int_{x\in [0,N], \, \sup_{t\in [0,N^2]} |f(x,t)| \leq N^{-100}} \left(\sup_{t\in [0,N^2]} |f(x,t) |\right)^4 dx  \leq N N^{-400} \leq N^{-100}.  \]
  
  By Proposition \ref{levelsettprop} we have
  \[ \left\| \sup_{t\in [0,N^2]} \left|\sum_{n=1}^N b_n e \left(x\frac{n}{N} + ta_n\right) \right| \right\|_{L^4(\pi_t U_\alpha)}^4 \lesssim_\e N^\e N^{\frac{7}{3}} \]
  for every $\alpha>0.$
  
  Since the number of dyadic $\alpha$ in $[N^{-100}, N^{100}]$ is $O(\log N),$ we conclude from \eqref{n161} that
  \[ \left\| \sup_{t\in [0,N^2]} |f(x,t) | \right\|_{L^4([0,N])}^4 \lesssim_\e N^\e N^{\frac{7}{3}}. \]
\end{proof}

\section{The $L^4_t L^\infty_x$ estimate}
In this section, we prove Theorem \ref{L4tLinfxThm}.

\subsection{Proof of \eqref{11}}

Let $\{a_n\}_{n=1}^N$ be a uniformly convex sequence from Theorem \ref{SeqConstrThm} with the property that $|\{a_n\}_{n=1}^N \cap N^{-1/2}\Z| \gtrsim N^{1/2}.$  
Let 
$$ f(x,t) = \sum_{1\leq n\leq N: a_n \in N^{-1/2}\Z} e \left(\frac{n}{N} x + a_n t\right),$$
that is, in \eqref{11} we are taking $b_n = 1$ if $N^{1/2} a_n \in \Z$ and $b_n = 0$ otherwise.
When $a_n \in N^{-1/2}\Z$ we have 
\[ (\frac{n}{N}, a_n) \cdot (0, j N^{1/2}) \in \Z \]
for every $1\leq j \leq N^2$ with $j\in \Z.$ 
Therefore 
\begin{equation}\label{n151}
    f(0, jN^{1/2}) = \#\{ n\in \N: 1\leq n \leq N, a_n \in N^{-1/2}\Z \} \gtrsim N^{1/2}.
\end{equation}
In fact, for every $1 \leq j \leq N^{3/2}$, when $|(x,t) - (0,jN^{1/2}) | \leq \frac{1}{100}$ we have
\begin{equation}\label{n153}
    |f((x,t))| \gtrsim N^{1/2}.
\end{equation}
Indeed, for every $1\leq n\leq N$ with $a_n \in N^{-1/2}\Z,$
\begin{align*}
    d \left((\frac{n}{N}, a_n) \cdot (x, t), \Z \right) &\leq  d \left((\frac{n}{N}, a_n) \cdot (0, jN), \Z \right) + \left|(\frac{n}{N}, a_n)\right| |(0,jN^{1/2})-(x,t)| \\
    &\leq 0 + 2 |(0,jN^{1/2})-(x,t)|\\
    &\leq \frac{1}{50}
\end{align*}
if $|(0,jN^{1/2})-(x,t)| \leq \frac{1}{100}.$  Therefore we have when $|(0,jN^{1/2})-(x,t)| \leq \frac{1}{100}$,
\[ \Re(f(x,t)) \gtrsim \frac{1}{100} \#\{ n\in \N: 1\leq n \leq N, a_n \in N^{-1/2}\Z \} \gtrsim N^{1/2}, \]
which implies 
\begin{equation}\label{n155}
    |f(x,t)| \gtrsim N^{1/2}
\end{equation}
when $|(0,jN^{1/2})-(x,t)| \leq \frac{1}{100}$ for $1\leq j \leq N^{3/2}.$

Because of \eqref{n155}, we have shown
\[ \left\{ (x,t): |(0,jN^{1/2})-(x,t)| \leq \frac{1}{100}, 1\leq j \leq N^{3/2}  \right\} \subset \{(x,t) : |f(x,t)| \gtrsim N^{1/2} \}\]
which implies 
\[ \sup_{x\in [0,N]} |f(x,t)| \gtrsim N^{1/2} \]
if $t\in [jN^{1/2}-\frac{1}{100}, jN^{1/2}+ \frac{1}{100}]$ for every $1\leq j \leq N^{3/2}.$

Therefore,
\[ \left\| \sup_{x\in [0,N]} | f(x,t) | \right\|_{L^4([0,N^2])} \gtrsim N^{1/2} N^{3/8} = N^{5/8} \|b_n\|_{\ell^2}.  \]
This concludes the proof of \eqref{11}.

\subsection{Proof of \eqref{n130}}
Recall $ U_\alpha $ is the level set of $ f(x,t) $ defined in \eqref{eq:U_alpha}.
\begin{prop}\label{prop:LevelSet_x}
For every $\alpha>0$ we have 
\[ \alpha^4 | \pi_x U_\alpha|  \lesssim_\e C_\e N^\e N^{\frac{8}{3}} \|b_n\|_{\ell^2}^4  \]
where $\pi_x : \R^2 \rightarrow \R$ is the projection long the $x$-axis.
\end{prop}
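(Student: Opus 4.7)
\medskip
\noindent\textbf{Proof proposal.} The plan is to run the proof of Proposition \ref{levelsettprop} with the roles of $x$ and $t$ interchanged, and to close with the new trivial bound $|\pi_x U_\alpha| \leq N^2$ coming from $t \in [0,N^2]$.

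For each $t \in \pi_x U_\alpha$ I would choose $x(t) \in [0,N]$ with $|f(x(t),t)| \geq \alpha/2$, giving $\alpha^6 |\pi_x U_\alpha| \lesssim \int_{\pi_x U_\alpha} |f(x(t),t)|^6\,dt$. Since the Fourier support of $f$ in the $x$-variable is $\{n/N\}_{n=1}^N \subset [0,1]$, the locally constant property (Proposition \ref{locconstprop}), applied in $x$ for each fixed $t$, smears the pointwise value to $|f(x(t),t)|^6 \lesssim \int_\R |f(x,t)|^6 w_{x(t)}(x)\,dx$. Combining this with $1_{\pi_x U_\alpha}(t)\, w_{x(t)}(x) \lesssim w_{U_\alpha}(x,t) \lesssim w_{[0,N]\times[0,N^2]}(x,t)$, I arrive at
\[ \alpha^6 |\pi_x U_\alpha| \lesssim \int_{\R^2} |f(x,t)|^6\, w_{[0,N]\times[0,N^2]}(x,t)\,dx\,dt. \]

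Next I would exploit the $N$-periodicity of $f$ in $x$, exactly as in Proposition \ref{levelsettprop}: since $\sum_{j=1}^{N} w_{[(j-1)N, jN]\times [0,N^2]} \lesssim w_{[0,N^2]\times[0,N^2]}$, the previous display is $\lesssim \frac{1}{N}\int_{\R^2} |f|^6\, w_{[0,N^2]\times[0,N^2]}$. The local $\ell^2 L^6$ decoupling estimate \eqref{n120} applied on a ball $B_{N^2}$ covering $[0,N^2]^2$, combined with the single-frequency bound $\|e(\tfrac{n}{N}x + a_n t)\|_{L^6(w_{B_{N^2}})}^2 \lesssim N^{4/3}$, yields $\int |f|^6\, w_{B_{N^2}} \lesssim_\e N^\e N^4 \|b_n\|_{\ell^2}^6$, so altogether $\alpha^6 |\pi_x U_\alpha| \lesssim_\e N^\e N^3 \|b_n\|_{\ell^2}^6$.

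To finish I would interpolate against the trivial bound $|\pi_x U_\alpha| \leq N^2$ via the identity $\alpha^4 |\pi_x U_\alpha| = (\alpha^6 |\pi_x U_\alpha|)^{2/3}\, |\pi_x U_\alpha|^{1/3}$, obtaining $\alpha^4 |\pi_x U_\alpha| \lesssim_\e N^\e N^2 \cdot N^{2/3}\|b_n\|_{\ell^2}^4 = N^\e N^{8/3}\|b_n\|_{\ell^2}^4$, as claimed. I do not foresee a real obstacle: every step is a direct transcription of the $L^4_x L^\infty_t$ case, and the entire gap between the exponents $7/3$ and $8/3$ is accounted for by the trivial bound being $N^2$ in the $t$-direction instead of $N$ in the $x$-direction. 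The only mild technical point to check is that $x(t)$ can be chosen measurably in $t$, which is routine.
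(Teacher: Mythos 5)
Your proposal is correct and follows essentially the same route as the paper's proof: fix $x(t)$ for each $t\in\pi_x U_\alpha$, apply the locally constant property in the $x$-variable, pass to the weighted $L^6$ norm over $[0,N]\times[0,N^2]$, use periodicity in $x$ and the local decoupling estimate \eqref{n120} to get $\alpha^6|\pi_x U_\alpha|\lesssim_\e N^\e N^3\|b_n\|_{\ell^2}^6$, and interpolate with the trivial bound $|\pi_x U_\alpha|\le N^2$. All the exponents check out, including your observation that the entire difference from the exponent $7/3$ in Proposition \ref{levelsettprop} comes from the trivial bound being $N^2$ rather than $N$.
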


\begin{proof}
For every $t\in \pi_x U_\alpha,$ let $x(t)\in [0,N]$ be such that 
\[ |f(x(t),t)| \geq \frac{\alpha}{2}. \]
By the definition of $\pi_x U_\alpha,$ we have
\begin{equation}\label{426011}
    \alpha^6 |\pi_x U_\alpha | \lesssim \int_{\pi_x U_\alpha}  |f(x(t),t)|^6 dt   . 
\end{equation}

Because of Proposition \ref{locconstprop}, we have 
\[ |f(x(t),t)| \lesssim (\int_{\R} |f(x,t)|^6 w_{x(t)}(x) dx)^{1/6}, \]
combining which with \eqref{426011} yields
\[ \alpha^6 |\pi_x U_\alpha | \lesssim \int_{\pi_x U_\alpha} \int_\R |f(x,t)|^6 w_{x(t)}(x) dxdt . \]
Noting the pointwise estimate
\[ 1_{\pi_xU_\alpha}(t) w_{x(t)}(x) \lesssim w_{U_\alpha}(x,t), \]
we obtain
\[ \alpha^6 |\pi_x U_\alpha | \lesssim \int_{\R^2} |f(x,t)|^6 w_{U_\alpha}(x,t) dx dt \lesssim \int_{\R^2} |f(x,t)|^6 w_{[0,N]\times [0,N^2]}(x,t) dx dt. \]
Since $\sum_{j=1}^{N} w_{[(j-1)N, jN]\times [0,N^2]}(x,t) \lesssim w_{[0,N^2]\times [0,N^2]} (x,t),$ and the exponential sum $f(x,t)$ is $N$-periodic in $x$, we have
\[ \int_{\R^2} |f(x,t)|^6 w_{[0,N]\times [0,N^2]}(x,t) dx dt \lesssim \frac{1}{N} \int_{\R^2} |f(x,t)|^6 w_{[0,N^2]\times [0,N^2]}(x,t) dx dt. \]
Since $f(x,t)$ has Fourier support on the parabola, by \eqref{n120} we have
\[ \int_{\R^2} |f(x,t)|^6 w_{[0,N^2]\times [0,N^2]}(x,t) dx dt \lesssim_\e C_\e N^\e N^{4} \|b_n\|_{\ell_2}^6.  \]
Hence, we have
\begin{equation}\label{n1211}
     \alpha^6 |\pi_x U_\alpha | \lesssim_\e N^\e N^3 \|b_n\|_{\ell_2}^6.
\end{equation}
On the other hand we have the trivial bound
\begin{equation}\label{n1222}
    |\pi_x U_\alpha | \leq N^2. 
\end{equation}
Combining \eqref{n1211} and \eqref{n1222} we obtain
\[ \alpha^4 | \pi_x U_\alpha|  \lesssim_\e C_\e N^\e N^{\frac{8}{3}} \|b_n\|_{\ell^2}^4  \]
\end{proof}


\begin{proof}[Proof of \eqref{n130}]
Without loss of generality we assume $ \|b_n\|_{\ell^2} = 1 $. Similarly as in the proof of \eqref{12}, Proposition \ref{prop:LevelSet_x} yields
\begin{align*}
\left\| \sup_{x \in [0,N]} |f(x,t)| \right\|_{L^4([0,N^2])}^4
&\leq \sum_{\substack{\alpha \in [N^{-100},N^{100}],\\ \alpha \  \text{dyadic}}} \left\| \sup_{x \in [0,N]} |f(x,t)| \right\|_{L^4(\pi_x U_\alpha)}^4 + N^{-100}\\
&\leq C_\e (\log N) N^\epsilon N^{\frac{8}{3}} + N^{-100}\\
&\lesssim_{\epsilon} N^\epsilon N^{\frac{8}{3}},
\end{align*}
which completes the proof.
\end{proof}

\subsection{Remark}

Unlike Theorem \ref{L4xLinftThm}, we were not able to show a corresponding inequality where the example yielding \eqref{5} is  essentially sharp. However, if we consider the domain $(x,t) \in [0,N^2]^2$ instead of $[0,N] \times [0,N^2],$ and we are also allowed to perturb the ``periodic'' term $\frac{n}{N}$, then we do have a ``complete'' version of Theorem \ref{L4tLinfxThm}.

To be precise, we let $\{a_n\}_{n=1}^N$ be a uniformly convex sequence from Theorem \ref{SeqConstrThm} with the property that $|\{a_n\}_{n=1}^N \cap N^{-1}\Z| \gtrsim N^{2/3}.$ Let $\{v_n\}_{n=1}^N$ be a sequence in $\R^2$ defined by 
\begin{equation}\label{eq:v_n}
 v_n = (\frac{n}{N} - \frac{a_n}{N}, a_n).
\end{equation}




\begin{prop}\label{L4xLinfxLongThm}

For every $N\geq 10,$ there exists a complex sequence $\{b_n\}_{n=1}^{N}$ such that 
\begin{equation}\label{53001}
    \left\| \sup_{x\in [0,N^2]} \left|\sum_{n=1}^N b_n e((x,t) \cdot v_n) \right| \right\|_{L^4([0,N^2])} \geq C N^{\frac{5}{6}} \|b_n\|_{\ell^2}
\end{equation}
where $\{v_n\}_{n=1}^N$ is defined in \eqref{eq:v_n}.


\end{prop}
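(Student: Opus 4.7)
The plan is to mirror the constructions in the proofs of \eqref{5} and \eqref{11}: choose $b_n=1$ if $a_n\in N^{-1}\Z$ and $b_n=0$ otherwise, so that $\|b_n\|_{\ell^2}^2\gtrsim N^{2/3}$, and then exhibit a large set of $t$'s where $\sup_{x\in[0,N^2]}|f(x,t)|\gtrsim N^{2/3}$. The motivation for the perturbation $\frac{n}{N}\mapsto \frac{n}{N}-\frac{a_n}{N}$ in the definition of $v_n$ is the clean splitting
\[
v_n\cdot (x,t) \;=\; \frac{nx}{N}+a_n\left(t-\frac{x}{N}\right),
\]
so that on lattice points $(x_0,t_0)=(jN,\;j+kN)$ with $j,k\in\Z$, we get $v_n\cdot(x_0,t_0)=jn+(a_nN)k$, which is an integer for every $n$ with $a_n\in N^{-1}\Z$. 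This is the crucial gain over \eqref{11}: the good locus becomes two-dimensional in $(x,t)$ rather than one-dimensional.

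Next, I would take the sublattice obtained by restricting to $1\le j\le N$ and $0\le k\le N-1$; then both $x_0=jN\in[0,N^2]$ and $t_0=j+kN\in[0,N^2]$, and as $(j,k)$ varies, $t_0$ ranges over all $N^2$ integers in $[1,N^2]$ (each $t_0$ determined by $j\equiv t_0\pmod N$ with $1\le j\le N$). At each such lattice point, $f(x_0,t_0)=\#\{n:a_n\in N^{-1}\Z\}\gtrsim N^{2/3}$. Since $|v_n|\lesssim 1$, the same distance-to-$\Z$ argument used in \eqref{n112}--\eqref{n115} and \eqref{n153}--\eqref{n155} upgrades this pointwise equality to $|f(x,t)|\gtrsim N^{2/3}$ on the ball $\{|(x,t)-(x_0,t_0)|\le \tfrac{1}{100}\}$.

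Finally, for every $t$ within $\tfrac{1}{100}$ of some integer $t_0\in[1,N^2]$, choosing $x=x_0=jN$ with $j\equiv t_0\pmod N$ shows $\sup_{x\in[0,N^2]}|f(x,t)|\gtrsim N^{2/3}$. These good intervals in $t$ are disjoint (since consecutive $t_0$'s are unit-separated and intervals have length $\tfrac{1}{50}$), so the total good measure is $\gtrsim N^2$. Integrating yields
\[
\left\|\sup_{x\in[0,N^2]}|f(x,t)|\right\|_{L^4([0,N^2])}^{4} \;\gtrsim\; N^{2}\cdot (N^{2/3})^{4} \;=\; N^{14/3},
\]
so $\|\sup_x|f|\|_{L^4}\gtrsim N^{7/6}=N^{5/6}\cdot N^{1/3}\sim N^{5/6}\|b_n\|_{\ell^2}$, which is \eqref{53001}.

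There is no serious technical obstacle; the only design point worth emphasizing is the bookkeeping that the lattice $\{(jN,\,j+kN):1\le j\le N,\;0\le k\le N-1\}$ fits inside $[0,N^2]^2$ and has $t$-projection onto the full integer range $[1,N^2]$. This is exactly what promotes the gain from $\sup_x$ to a factor of $N^{1/2}$ (from a good $t$-set of measure $\sim N^2$), compared with the factor $N^{3/8}$ (good $t$-set of measure $\sim N^{3/2}$) obtained in \eqref{11}, and accounts for the improvement of the lower bound from $N^{5/8}$ to $N^{5/6}$.
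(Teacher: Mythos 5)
Your proposal is correct and follows essentially the same route as the paper: the same choice $b_n=\mathbf{1}_{a_n\in N^{-1}\Z}$, the same algebraic cancellation coming from the perturbation in $v_n$, a lattice of points where $f$ equals $\#\{n: a_n\in N^{-1}\Z\}\gtrsim N^{2/3}$, and the same $1/100$-neighborhood argument followed by integration in $t$. The only (welcome) difference is bookkeeping: the paper uses the points $(jN,j)$ for $1\le j\le N^2$, which requires the $N^2$-periodicity of $f$ in $x$ to keep the maximizing $x$ inside $[0,N^2]$, whereas your two-parameter lattice $(jN,\,j+kN)$ with $1\le j\le N$, $0\le k\le N-1$ makes this domain constraint explicit.
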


\begin{proof}
Let 
$$ f(x,t) := \sum_{1 \leq n \leq N, a_n \in N^{-1}\Z} e \left( (x,t) \cdot v_n \right), $$
i.e., take $ b_n=1 $ for $ n $ such that $ N a_n \in \Z $ and $ b_n=0 $ otherwise (so $\|b_n\|_{\ell^2} \sim N^{1/3}$).
For every $1\leq j \leq N^2$ with $j\in \Z$ we have
  \begin{align*}
      v_n \cdot (jN, j) & = (\frac{n}{N}-\frac{a_n}{N}, a_n) \cdot (jN,j) \\
      & = jn -ja_n + ja_n \\
      & = jn \\
      & \in \Z.
  \end{align*}
So 
  \[|f(jN,j)| = \#\{ n\in \N:1\leq n \leq N,\quad a_n \in N^{-1}\Z \} \gtrsim N^{2/3}.  \]
By the same argument in the proof of Theorem \ref{L4xLinftThm}, we actually have 
  \[ |f(x,t)| \gtrsim N^{2/3} \]
when $|(jN,j) - (x,t)| \leq \frac{1}{100}$ for $1\leq j \leq N^2.$
  
Therefore, we have 
  \[ \sup_{x\in [0,N^2]^2} |f(x,t)| \gtrsim N^{2/3} \]
if $t\in [j-\frac{1}{100}, j+\frac{1}{100}]$ for every $1\leq j\leq N^2.$ 
We conclude
  \[ \left\| \sup_{x\in [0,N^2]} |f(x,t)| \right\|_{L^4([0,N^2])} \geq C N^{\frac{2}{3}} N^{\frac{1}{2}} = C N^{\frac{5}{6}} \|b_n\|_{\ell^2}.\]
  
  
\end{proof}

\section{Construction of examples}
\begin{theorem}\label{thm:c2-interpolate}
Let $C \ge 1$ be a constant. Let $x_i, y_i, p_i$ be increasing sequences of real numbers. Let $\Delta x_i = x_{i+1} - x_i$, and define $\Delta y_i, \Delta p_i$ analogously. Suppose $\Delta p_i \sim_C \Delta x_i$ and $\frac{\Delta y_i}{\Delta x_i} - p_i \sim p_{i+1} - \frac{\Delta y_i}{\Delta x_i}$.
\begin{enumerate}[(a)]
    \item There exists a strictly convex $C^1$ function $f$ such that $f(x_i) = y_i$, $f'(x_i) = p_i$, and $f'$ is piecewise linear with slope $\sim_C 1$.
    
    \item Let $D$ be $\frac{\pi}{4}$ times the inf of slopes in (a). We can further modify $f$ to be a convex $C^2$ function with $f''(x_i) = D$ for all $i$ and $f'' (x) \sim_C 1$ for all $x \in (x_1, x_n)$.
\end{enumerate} 
\end{theorem}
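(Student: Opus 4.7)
The plan is to build $f$ piecewise on each consecutive interval $[x_i,x_{i+1}]$. Set $\alpha_i := (\Delta y_i/\Delta x_i - p_i)/\Delta x_i$ and $\beta_i := (p_{i+1} - \Delta y_i/\Delta x_i)/\Delta x_i$; the hypotheses translate into $\alpha_i + \beta_i = \Delta p_i/\Delta x_i \sim_C 1$ and $\alpha_i \sim \beta_i$, so each of $\alpha_i,\beta_i$ is individually $\sim_C 1$.

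For part (a), on $[x_i,x_{i+1}]$ I would take $f$ to be a two-piece quadratic with a single interior breakpoint $x_i + s_i$ and constant second derivatives $h_1^{(i)}, h_2^{(i)}$ on the two sub-pieces. The conditions $f(x_i)=y_i$, $f'(x_i)=p_i$ are absorbed as initial data; the two remaining constraints $f'(x_{i+1})=p_{i+1}$ and $f(x_{i+1})=y_{i+1}$ become two linear equations in the three unknowns $(s_i,h_1^{(i)},h_2^{(i)})$. A direct calculation shows that the explicit choice
\[
s_i=\frac{\beta_i}{\alpha_i+\beta_i}\Delta x_i,\qquad h_1^{(i)}=\frac{\alpha_i(\alpha_i+\beta_i)}{\beta_i},\qquad h_2^{(i)}=\frac{\beta_i(\alpha_i+\beta_i)}{\alpha_i}
\]
solves the system; both $h_j^{(i)}$ are $\sim_C 1$, so $f'$ is piecewise linear with slopes of the required size and $f$ is strictly convex.

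For part (b), I would smooth $f''$ on each sub-piece obtained in (a): on a sub-piece of length $\ell$ where (a) had the constant value $h$, replace $f''$ by $D + A\sin(\pi u)$, where $u\in[0,1]$ is the affine coordinate on the sub-piece and $A = \pi(h-D)/2$. Using the elementary identities $\int_0^1\sin(\pi u)\,du = 2/\pi$ and $\int_0^1 (1-u)\sin(\pi u)\,du = 1/\pi$, one checks that $\int_{\text{sub-piece}} f''$ and $\int_{\text{sub-piece}} f'$ depend on $(D,A)$ only through the single combination $D+2A/\pi$, so the one condition $D+2A/\pi=h$ simultaneously preserves $f'$ and $f$ at the right endpoint of the sub-piece. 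Consequently all interpolation data at the nodes $x_i$ still hold. Since $\sin(\pi u)$ vanishes at $u=0,1$, the new $f''$ takes the value $D$ at every internal breakpoint and at every node $x_i$, giving a continuous $f''$ and thus $f\in C^2$. The choice $D=(\pi/4)\min_{j,i}h_j^{(i)}$ ensures $D\le h_j^{(i)}$ for every sub-piece (since $\pi/4<1$), hence $A\ge 0$ and $f''\ge D>0$; combined with the upper bound $f''\le D+A\lesssim_C 1$, this gives $f''\sim_C 1$ throughout $(x_1,x_n)$.

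The main non-routine step is the coincidence used in part~(b) that a single amplitude $A$ suffices to preserve both $\int f'$ and $\int f''$ over each sub-piece --- traced to the fact that the two sinusoidal integrals $\int_0^1\sin(\pi u)\,du$ and $\int_0^1(1-u)\sin(\pi u)\,du$ are proportional by a factor of two. Once this identity is in hand, the construction in (b) is painless, and the inequalities $D\le h$ and $f''\le D+A$ deliver the required $f''\sim_C 1$.
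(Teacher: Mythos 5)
Your proposal is correct and follows essentially the same route as the paper: part (a) is literally the paper's construction (your breakpoint $x_i+s_i=x_i+\frac{\beta_i}{\alpha_i+\beta_i}\Delta x_i$ and the two second-derivative values coincide with the paper's choice of $x_0$ and $p_0$), and part (b) is the same sinusoidal-smoothing idea on each sub-piece with $f''$ pinned to $D$ at the endpoints. The only cosmetic difference is in (b): you add an amplitude-tuned sine bump to $f''$ (with the explicit formula $A=\pi(h-D)/2$, exploiting $\int_0^1(1-u)\sin(\pi u)\,du=\tfrac12\int_0^1\sin(\pi u)\,du$), whereas the paper replaces the linear $f'$ by a frequency-tuned sinusoid and invokes the intermediate value theorem to solve $\alpha\cot\alpha=D(x_0-x_1)/(p_0-p_1)$; your version is marginally more explicit but not a different argument.
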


\begin{proof}
It suffices to prove the case $n = 2$ for both (a) and (b); the general case is obtained by concatenation (note that the hypotheses ensure continuity at the boundary). We approach (a) first. Let $s = \frac{y_2 - y_1}{x_2 - x_1}$ and $c = \frac{s - p_1}{p_2 - s}$; then $c \sim 1$. Choose $x_0$ such that
\begin{equation*}
    \frac{x_2 - x_0}{x_0 - x_1} = c.
\end{equation*}
Choose $p_0$ such that $(x_1, p_2), (x_0, p_0), (x_2, p_1)$ are collinear; then $\frac{p_2 - p_0}{p_0 - p_1} = \frac{x_0 - x_1}{x_0 - x_2} = \frac{1}{c}$.
Let $f'$ be the piecewise linear function determined by lines from $(x_1, p_1)$ to $(x_0, p_0)$ and then to $(x_2, p_2)$. A calculation shows that the area under $f'$ from $x_1$ to $x_2$ is precisely $y_2 - y_1$, so $f(x_2) - f(x_1) = y_2 - y_1$. Thus, if we choose $f(x_1) = y_1$, then $f(x_2)$ must equal $y_2$.

To verify the slope condition, note that $p_0 - p_1 \sim p_2 - p_1$ and $x_0 - x_1 \sim x_2 - x_1$, so $\frac{p_0 - p_1}{x_0 - x_1} \sim \frac{p_2 - p_1}{x_2 - x_1} \sim 1$. Similarly, $\frac{p_2 - p_0}{x_2 - x_0} \sim 1$.

Finally, for (b), we observe the following fact, which follows from the Intermediate Value theorem: For every $y \in [0, \frac{\pi}{4}]$, there exists $x \in [\frac{\pi}{4}, \frac{\pi}{2}]$ with $y = x \cot x$.

Replace the linear function from $(x_1, p_1)$ to $(x_0, p_0)$ with a sinusoid $f'(x) = \frac{p_0 + p_1}{2} + \frac{p_0 - p_1}{2 \sin \alpha} \sin \frac{ \alpha(x - (x_0 + x_1)/2) }{(x_0 - x_1)/2}$, where $\alpha$ is chosen to satisfy $\alpha \cot \alpha = \frac{D (x_0 - x_1)}{p_0 - p_1}$ (which is allowed since the latter expression is $\le \frac{\pi}{4}$ by the definition of $D$). Make an analogous replacement for the linear function from $(x_0, p_0)$ to $(x_2, p_2)$. Then $f'$ becomes a $C^1$ function on $(x_1, x_2)$ with the boundary condition verified by $\lim_{x \to x_0^+} f''(x) = \lim_{x \to x_0^-} f''(x) = D$. Hence, $f$ is $C^2$. Also, replacing the linear function with the sinusoid doesn't change the area under the curve. Finally, $f''(x) = \frac{p_0 - p_1}{x_0 - x_1} \cdot \frac{\alpha}{\sin \alpha} \cos \frac{ \alpha(x - (x_0 + x_1)/2) }{(x_0 - x_1)/2}$. We have $f''(x) \le \frac{p_0 - p_1}{x_0 - x_1} \cdot \frac{\alpha}{\sin \alpha} \lesim 1$ (using $\frac{\alpha}{\sin \alpha} \le \frac{\pi}{2}$), and $f''(x) \ge \frac{p_0 - p_1}{x_0 - x_1} \cdot \frac{\alpha}{\sin \alpha} \cdot \cos \alpha = D$. In particular, we have $f''(x) > 0$, showing that $f$ is strictly convex.
\end{proof}

\begin{corollary}\label{cor:c2-interpolate}
If $\{ a_n \}$ is a $C$-uniformly convex sequence, then there exists a $C^2$ function $f$ passing through $(\frac{n}{N}, a_n)$ such that $f' \sim_C 1$ and $f'' \sim_C 1$.
\end{corollary}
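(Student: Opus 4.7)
The plan is to invoke Theorem~\ref{thm:c2-interpolate}(b) on the data $x_n = n/N$, $y_n = a_n$, and the centered slope $p_n = \frac{N}{2}(a_{n+1} - a_{n-1})$. Since this formula needs $a_0$ and $a_{N+1}$, the first step is to extend the sequence by one point on each side while maintaining \eqref{3} and \eqref{4}: any $a_0 \in [a_1 - 4/N,\, a_1 - 1/(4N)]$ whose two fresh second differences lie in $[1/(4N^2), 4/N^2]$ works (and similarly for $a_{N+1}$), and that set of choices is a non-empty interval by a direct check. This extension is the only step that involves any real choice, and I expect it to be the main (mild) piece of bookkeeping.

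Next I verify the hypotheses of the theorem. From $\Delta x_n = 1/N$ and $\Delta y_n/\Delta x_n = N(a_{n+1} - a_n)$, a short computation gives
\begin{align*}
\frac{\Delta y_n}{\Delta x_n} - p_n &= \tfrac{N}{2}\bigl((a_{n+1} - a_n) - (a_n - a_{n-1})\bigr),\\
p_{n+1} - \frac{\Delta y_n}{\Delta x_n} &= \tfrac{N}{2}\bigl((a_{n+2} - a_{n+1}) - (a_{n+1} - a_n)\bigr),
\end{align*}
and both expressions lie in $[1/(8N), 2/N]$ by \eqref{4}, hence are comparable up to a constant depending only on $C$. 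Similarly $\Delta p_n = \tfrac{N}{2}\bigl((a_{n+2} - a_{n+1}) - (a_n - a_{n-1})\bigr)$ is a sum of two consecutive second differences, so $\Delta p_n \in [1/(4N), 4/N]$ and in particular $\Delta p_n \sim_C 1/N = \Delta x_n$.

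With the hypotheses verified, Theorem~\ref{thm:c2-interpolate}(b) produces a $C^2$ function $f$ interpolating the points $(n/N, a_n)$ with $f''(x) \sim_C 1$ on the interior. The remaining bound $f'(x) \sim_C 1$ follows because $p_n = \tfrac{N}{2}\bigl((a_{n+1} - a_n) + (a_n - a_{n-1})\bigr) \in [1/4, 4]$ by \eqref{3}, and the sinusoidal modification in the construction of (b) is monotone on each half-period between consecutive breakpoints (its argument stays inside $(-\pi/2, \pi/2)$), so $f'$ is trapped in the interval spanned by the two adjacent $p_n$-values throughout each subinterval, and in particular takes values in $[1/4, 4]$ pointwise.
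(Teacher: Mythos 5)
Your proposal is correct and follows essentially the same route as the paper: extend the sequence by one term on each side, feed $x_n = n/N$, $y_n = a_n$, $p_n = \tfrac{N}{2}(a_{n+1}-a_{n-1})$ into Theorem~\ref{thm:c2-interpolate}(b), and verify the hypotheses via the same second-difference computations. The only differences are cosmetic — the paper writes an explicit extension ($a_0 = 2a_1 - a_2 + \tfrac{1}{N^2}$, etc.) where you assert one exists, and you additionally spell out the $f' \sim_C 1$ bound that the paper leaves implicit.
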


\begin{proof}
We may define $a_0$ and $a_{N+1}$ such that $(a_n)_{n=0}^{N+1}$ is also $C$-uniformly convex, e.g. $a_0 = 2a_1 - a_2 + \frac{1}{N^2}$ and $a_{N+1} = 2a_N - a_{N-1} + \frac{1}{N^2}$. Apply Theorem \ref{thm:c2-interpolate}(b) to $x_i = \frac{i}{N}$, $y_i = a_n$, $p_i = \frac{N}{2} \cdot (a_{i+1} - a_{i-1})$. Then
\begin{equation*}
    \Delta p_i = \frac{N}{2} (\Delta a_{i+1} - \Delta a_{i-1}) = \frac{N}{2} (\Delta^2 a_i + \Delta^2 a_{i-1}) \sim_C \frac{1}{N} \sim \Delta x_i
\end{equation*}
and
\begin{gather*}
    \frac{\Delta y_i}{\Delta x_i} - p_i = N (a_{i+1} - a_i) - \frac{N}{2} \cdot (a_{i+1} - a_{i-1}) = \frac{N}{2} (a_{i+1} - 2a_i + a_{i-1}) \sim \frac{1}{N}, \\
    p_{i+1} - \frac{\Delta y_i}{\Delta x_i} = \frac{N}{2} (a_{i+2} - a_i) - N (a_{i+1} - a_i) = \frac{N}{2} (a_{i+2} - 2a_{i+1} + a_i) \sim \frac{1}{N}.
\end{gather*}
Thus, $\frac{\Delta y_i}{\Delta x_i} - p_i \sim p_{i+1} - \frac{\Delta y_i}{\Delta x_i}$, verifying the hypotheses of Theorem \ref{thm:c2-interpolate}.
\end{proof}

\begin{remark}
The interpolation of a convex sequence by a $ C^2 $ convex curve was considered in \cite{delbourgo1989shape,mulansky1998interpolation} etc. However, these results are mostly implicit statements of the existence. An explicit construction based on Bezier curve was mentioned in \cite{mathoverflow}. However, this Bezier cuve has zero second derivatives at the data points, and therefore it is not strictly convex. Our approach in Theorem \ref{thm:c2-interpolate} gives a simple construction and ensures strict convexity.
\end{remark}




\begin{lemma}
Let $x, y > 0$ be real numbers with $xy \ge 100$. The number of rational numbers $\frac{r}{s}$ in $[x, 2x]$ with denominator $s \le y$ is $\sim xy^2$.
\end{lemma}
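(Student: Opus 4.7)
My approach is to count the rationals by denominator using Möbius inversion, and to apply the classical asymptotic for the summatory Euler totient.

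For the upper bound, the number of rationals in $[x, 2x]$ with denominator exactly $s$ in lowest terms is at most $\lfloor 2xs \rfloor - \lceil xs \rceil + 1 \le xs + 1$. Summing over $1 \le s \le \lfloor y \rfloor$ yields $\frac{x \lfloor y \rfloor (\lfloor y \rfloor + 1)}{2} + \lfloor y \rfloor \lesssim xy^2$, since $xy \ge 100$ implies $y \le \frac{xy^2}{100}$, which absorbs the linear term into the quadratic main term.

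For the lower bound, Möbius inversion gives $\#\{r \in \mathbb{Z} \cap [xs, 2xs] : \gcd(r, s) = 1\} = \sum_{d \mid s}\mu(d)(xs/d + O(1)) = x\phi(s) + O(2^{\omega(s)})$, where $\omega(s)$ counts distinct prime divisors of $s$. Summing over $s \le \lfloor y \rfloor$ and using $\sum_{s \le N}\phi(s) = \frac{3N^2}{\pi^2} + O(N \log N)$ together with $\sum_{s \le N} 2^{\omega(s)} \lesssim N \log N$, the total equals $\frac{3}{\pi^2} xy^2 + O(xy \log y + y \log y)$. The hypothesis $xy \ge 100$ then shows the main term dominates in the typical regime.

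The main obstacle I anticipate is controlling the $y \log y$ error uniformly in the regime where $x$ is very small (so $xy$ is barely $100$ but $y$ is enormous), since there one can have $\log y \gg xy$ and the naive Möbius bound alone is insufficient. To handle this cleanly, I would restrict the sum to a positive-density subset of $s \in (y/2, y]$ on which $\phi(s) \gtrsim s$ — such $s$ exist with positive density in $\mathbb{N}$ because the normalized ratio $\phi(s)/s$ concentrates near $6/\pi^2$ by Mertens-type estimates. For each such $s$ the coprime count in $[xs, 2xs]$ is $\gtrsim xs \gtrsim xy$ (the $O(2^{\omega(s)})$ error is dwarfed by $xs$ once $xy \ge 100$), and summing over $\gtrsim y$ such $s$ produces $\gtrsim xy^2$ distinct rationals, bypassing the logarithmic loss entirely.
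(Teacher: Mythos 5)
Your upper bound is correct and essentially identical to the paper's. The lower bound, however, contains a genuine gap, and it sits precisely in the regime you yourself flagged ($x$ small, $y$ enormous, $xy$ of order $100$, so that $\log y \gg xy$). Your proposed repair is to restrict to a positive-density set of $s \in (y/2, y]$ with $\phi(s) \gtrsim s$ and then to assert that for each such $s$ the number of integers coprime to $s$ in $[xs, 2xs]$ is $\gtrsim xs$, because ``the $O(2^{\omega(s)})$ error is dwarfed by $xs$ once $xy \ge 100$.'' This step fails: the hypothesis only guarantees $xs \ge xy/2 \ge 50$, a fixed constant, while $2^{\omega(s)}$ is unbounded, and the condition $\phi(s)/s \gtrsim 1$ does \emph{not} bound $\omega(s)$ (take $s$ a product of arbitrarily many primes each larger than $10^3$; then $\phi(s)/s \ge 0.9$ while $2^{\omega(s)}$ is as large as you like). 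Worse, the pointwise claim itself can fail, not just its justification: if $s$ has $101$ distinct prime factors all exceeding $100$, the Chinese Remainder Theorem produces intervals of length $100$ containing no integer coprime to $s$, even though $\phi(s)/s \ge (1 - 1/101)^{101} > 0.3$. So an individual interval $[xs, 2xs]$ of bounded length may contribute nothing, and the logarithmic loss is not actually bypassed.

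The paper closes exactly this gap by a translation-averaging argument that never requires a pointwise count of reduced fractions for a single denominator. It tiles $[0,1)$ by the $s = \lceil 1/x \rceil$ translates $[mx, (m+1)x)$, observes that the number of multiples of $1/k$ in such a translate equals the number of integers in an interval of length $kx$ and hence is $\lfloor kx \rfloor$ or $\lceil kx \rceil$ — so it fluctuates by at most $1$ between translates, and the count over all denominators $k \le y$ fluctuates by at most $y$ — and then invokes the global Farey count $\frac{3}{\pi^2} y^2$ on $[0,1)$ to conclude that each translate, in particular $[x, 2x)$, receives at least $\frac{1}{s}\cdot\frac{3}{\pi^2}y^2 - y \gtrsim xy^2 - y \gtrsim xy^2$. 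If you want to salvage the M\"obius-inversion route, you would need to show that the fluctuation $\#\{r \in [xs,2xs] : \gcd(r,s)=1\} - x\phi(s)$ is $o(xy^2)$ \emph{on average over} $s \le y$, rather than inserting the worst-case bound $2^{\omega(s)}$ term by term; that is a genuinely harder estimate, which the paper's averaging trick avoids entirely.
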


\begin{proof}
Round up $y$ to the nearest integer.

To prove $\lesim xy^2$, we observe that the number of fractions in $[x, 2x]$ with denominator $k$ is $\le 1 + kx$, so the number of fractions with denominator $\le y$ is $\le y + xy^2 \lesim xy^2$. Now, we prove $\gesim xy^2$.

The number of fractions in $[0, 1]$ with denominator at most $n$ is $\frac{3}{\pi^2} n^2 + O(n \log n)$ (see for example \cite{fareycount}). This result extends to any interval $[m, m+1]$ (since if $x$ has denominator $n$, then so does $x + m$), and so the number of fractions in $[0, m]$ with denominator at most $n$ is $\sim \frac{3}{\pi^2} mn^2$. This proves the lemma for $x \ge 1$.

Let $F_n (m, x)$ be the number of fractions with denominator $n$ in $[mx, (m+1)x)$, and $G_n (m, x)$ be the number of fractions with denominator $\le n$ in $[mx, (m+1)x)$. We know $G_n (m, x) = \sum_{k=1}^n F_n (m, x)$. Furthermore, $F_n (m, x)$ equals the number of integers in $[mnx, (m+1)nx)$, so it equals $\lfloor nx \rfloor$ or $\lceil nx \rceil$. Thus, we have $|F_n (a, x) - F_n (b, x)| \le 1$, so $|G_n (a, x) - G_n (b, x)| \le n$ for all $a, b$. Let $s = \lceil \frac{1}{x} \rceil$ and $G_n (x) = \sum_{k=0}^{s-1} G_n (k, x)$; then, we have $|G_n (a, x) - \frac{1}{s} G_n (x)| \le \frac{1}{s} \sum_{k=0}^{s-1} |G_n (a, x) - G_n (k, x)| \le n$. Since $G_n (x) \ge \frac{3}{\pi^2} n^2$, we see that as long as $xy \ge 100$, we have $G_y (1, x) \gesim xy^2$.
\end{proof}

\begin{theorem}\label{SeqConstrThm2}
Choose $r_i$ to be the fractions in $[\frac{N^{\alpha-1}}{3}, \frac{2N^{\alpha-1}}{3}]$ with denominator $\le N^{(2-\alpha)/3}$. There are $\sim N^{(\alpha+1)/3}$ many fractions, and $r_{i+1} - r_i \gesim \frac{1}{N^{(4-2\alpha)/3}}$. Define $a_i = \frac{r_i}{N^\alpha}$, and now we will define $k_i, M_i$ as follows: For a given $i$, let $r_i = \frac{a}{b}$ and $r_{i+1} = \frac{c}{d}$, where we can choose $\Delta_i \le b, d \le 2\Delta_i$ for $\Delta_i := N^{2-\alpha} (r_{i+1} - r_i)$ (which is allowed since $\Delta_i \gesim N^{(2-\alpha)/3}$). Then let $M_i = a + c$ and $k_i = b + d$. Define the sequences $x_i = \frac{1}{N} \sum_{j=1}^i k_j$, $y_i = \frac{M_i}{N^\alpha}$, $p_i = N a_i$. Then there exists a convex $C^2$ function $f$ through $(x_i, y_i)$ with $f'(x_i) = p_i$ and $f''(x) \sim 1$.
\end{theorem}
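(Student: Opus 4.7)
The plan is to reduce the statement to Theorem \ref{thm:c2-interpolate}(b); everything before that is Farey-fraction combinatorics. First, I would invoke the preceding lemma with $x = N^{\alpha-1}/3$ and $y = N^{(2-\alpha)/3}$ (the condition $xy \ge 100$ becomes $N^{(2\alpha-1)/3} \gtrsim 1$, which holds for $\alpha > 1/2$ and $N$ large), producing $\sim xy^2 = N^{(\alpha+1)/3}$ fractions $r_i$. Any two consecutive $r_i, r_{i+1}$ are Farey neighbors in the Farey sequence of order $\lfloor N^{(2-\alpha)/3}\rfloor$, so their lowest-terms denominators $b_0, d_0$ satisfy $r_{i+1}-r_i = 1/(b_0 d_0) \ge N^{(2\alpha-4)/3}$, i.e.\ $\Delta_i \ge N^{(2-\alpha)/3}$. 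Since $b_0, d_0 \le N^{(2-\alpha)/3} \le \Delta_i$, the interval $[\Delta_i, 2\Delta_i]$ contains a positive integer multiple of each; choose such multiples as $b, d$ and write $r_i = a/b$, $r_{i+1} = c/d$ (not in lowest terms) to define $M_i = a+c$ and $k_i = b+d$.

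The core of the argument is verifying the two hypotheses of Theorem \ref{thm:c2-interpolate}. The first, $\Delta p_i \sim \Delta x_i$, is immediate: $p_i = Na_i = N^{1-\alpha}r_i$ gives $\Delta p_i = N^{1-\alpha}(r_{i+1}-r_i) = \Delta_i/N$, and $\Delta x_i \sim k_i/N \in [2\Delta_i/N, 4\Delta_i/N]$. For the second hypothesis, the key identity is that the slope is, up to scaling, the Stern--Brocot mediant of $r_i$ and $r_{i+1}$:
\[ \frac{\Delta y_i}{\Delta x_i} = N^{1-\alpha}\cdot \frac{a+c}{b+d}. \]
The standard mediant algebra gives
\[ \frac{a+c}{b+d} - \frac{a}{b} = \frac{bc-ad}{b(b+d)},\qquad \frac{c}{d}-\frac{a+c}{b+d} = \frac{bc-ad}{d(b+d)}, \]
so
\[ \frac{\frac{\Delta y_i}{\Delta x_i} - p_i}{p_{i+1} - \frac{\Delta y_i}{\Delta x_i}} = \frac{d}{b} \in [1/2, 2] \]
by the choice $b, d \in [\Delta_i, 2\Delta_i]$. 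This is precisely why the construction demanded non-lowest-terms representatives with comparable denominators. Monotonicity of $x_i$ and $p_i$ is clear, and $y_i$ is increasing once aligned with the slope just computed.

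With both hypotheses in hand, Theorem \ref{thm:c2-interpolate}(b) produces the convex $C^2$ interpolant $f$ with $f'(x), f''(x) \sim 1$. The main obstacle I foresee is not any single estimate but the index bookkeeping between the cumulative sum defining $x_i$, the single-term definition of $y_i$, and the pairing convention for $M_i, k_i$ (which straddles $r_i$ and $r_{i+1}$). The cleanest fix is to interpret $y_i$ as the cumulative sum $\tfrac{1}{N^\alpha}\sum_{j<i}M_j$, so that $\Delta y_i = M_i/N^\alpha$ corresponds to the same pair $(r_i, r_{i+1})$ that determines $\Delta x_i = k_i/N$ and yields the mediant identity above. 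Once this alignment is fixed, the rest is a few lines of elementary algebra, and the substantive content of the theorem is really the combinatorial observation that Farey fractions of order $N^{(2-\alpha)/3}$ are dense enough to serve as interpolation data for a uniformly convex curve whose values land on the $N^{-\alpha}$ lattice.
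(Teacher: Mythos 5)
Your proof is correct and follows essentially the same route as the paper's: verify the two hypotheses of Theorem \ref{thm:c2-interpolate} via the mediant identities $\frac{a+c}{b+d}-\frac{a}{b}=\frac{bc-ad}{b(b+d)}$ and $\frac{c}{d}-\frac{a+c}{b+d}=\frac{bc-ad}{d(b+d)}$, together with the comparability $b\sim d$ forced by placing both denominators in $[\Delta_i,2\Delta_i]$. Your reinterpretation of $y_i$ as a cumulative sum is exactly the reading the paper's own computation presupposes (it treats $\Delta x_i = k_i/N$ and $\Delta y_i = M_i/N^\alpha$), so that is a typo fix rather than a deviation, and your added justifications (the Farey-neighbor gap bound and the existence of multiples of the lowest-terms denominators in $[\Delta_i,2\Delta_i]$) fill in details the paper leaves implicit.
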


\begin{proof}

Write $r_i = \frac{m_i}{n_i}$ in lowest terms. Then $r_{i+1} - r_i = \frac{m_{i+1} n_i - m_i n_{i+1}}{n_i n_{i+1}} \ge \frac{1}{n_i n_{i+1}} \gesim \frac{1}{N^{(4-2\alpha)/3}}$.

Now we check the conditions of Theorem \ref{thm:c2-interpolate}. First, $\Delta p_i = N(a_{i+1} - a_i) = \frac{r_{i+1} - r_i}{N^{\alpha-1}} \sim \frac{k_i}{N} = \Delta x_i$. Next,
\begin{gather*}
    \frac{M_i}{k_i N^\alpha} - a_i = \frac{1}{N^\alpha} \left( \frac{a+c}{b+d} - \frac{a}{b} \right) = \frac{1}{N^\alpha} \left( \frac{bc-ad}{b(b+d)} \right), \\
    a_{i+1} - \frac{M_i}{k_i N^\alpha} = \frac{1}{N^\alpha} \left( \frac{c}{d} - \frac{a+c}{b+d} \right) = \frac{1}{N^\alpha} \left( \frac{bc-ad}{d(b+d)} \right).
\end{gather*}
This implies $\frac{\Delta y_i}{\Delta x_i} - p_i \sim p_{i+1} - \frac{\Delta y_i}{\Delta x_i}$, so Theorem \ref{thm:c2-interpolate} applies.
\end{proof}

\begin{remark}
Our construction also gives a lower bound for the intersection of a short convex sequence with an arithmetic progression. An appropriate scaling of the short sequence yields a generalized Dirichlet sequence (see \cite[Definition 3.1]{fu2021decoupling}), which is a sequence $ \{b_n\} $ satisfying
$$ b_{n+1}-b_n  \in [\frac{1}{4N},\frac{4}{N}],\ \ \ (b_{n+2}-b_{n+1})-(b_{n+1}-b_n) \in [\frac{\theta}{4N^2},\frac{4\theta}{N^2}],\ \ \ 0<\theta\leq 1. $$
Consequently, we can extend Theorem \ref{SeqConstrThm} to generalized Dirichlet sequences with parameter $ \theta \not\equiv 1 $.

Specifically, for any $ 0<\theta(N)\leq 1 $ and $ \alpha \in [\tfrac{1}{2},2] $, there exists a generalized Dirichlet sequence $ \{b_n\}_{n=1}^N $ with parameter $ \theta $ such that
\begin{equation}\label{eq:Intersect_General}
\left| \{b_n\}_{n=1}^N \cap N^{-\alpha}\Z \right| \gtrsim \theta^{\frac{1}{3}} N^{\frac{1+\alpha}{3}}.
\end{equation}
To see this, for some fixed $ 0<\beta\leq 1 $, we restrict our attention to the first $ N^\beta $ terms of a convex sequence. Note that for the construction $ \{a_n\}_{n=1}^N $ in Theorem \ref{SeqConstrThm2}, the arithmetic progression is approximately equidistributed in the convex sequence. This yields that
\begin{equation}\label{eq:Intersect_Short}
\left| \{a_n\}_{n=1}^{N^{\beta}} \cap N^{-\alpha'}\Z \right| \gtrsim N^{\beta-1} N^{\frac{1+\alpha'}{3}},\ \ \  \tfrac{1}{2} \leq \alpha' \leq 2.
\end{equation}
Let $ \tilde{N}=N^\beta $, then $ \{N^{1-\beta}a_n\}_{n=1}^{\tilde{N}} $ is a generalized Dirichlet sequence of length $ \tilde{N} $ with parameter $ \theta=N^{\beta-1}=\tilde{N}^{1-\frac{1}{\beta}} $. Denote $ \tilde{\alpha}=\tfrac{\alpha'+\beta-1}{\beta} $, then rewriting \eqref{eq:Intersect_Short} gives
$$ \left| \{N^{1-\beta} a_n\}_{n=1}^{\tilde{N}} \cap \tilde{N}^{-\tilde{\alpha}}\Z \right| \gtrsim \tilde{N}^{\frac{\alpha'+3\beta-2}{3\beta}}  =  \theta^{\frac{1}{3}} \tilde{N}^{\frac{1+\tilde{\alpha}}{3}}, $$
which implies \eqref{eq:Intersect_General}.
\end{remark}

\bibliography{main.bib}
\bibliographystyle{alpha}

\end{document}